\documentclass[11pt,oneside]{amsart}
\usepackage[maxbibnames=99, giveninits=true, doi=false, url=false, isbn=false]{biblatex}
\usepackage[english]{babel}
\usepackage{csquotes}
\usepackage{geometry}
\usepackage{amsmath}
\usepackage{amssymb}
\usepackage{amsthm}
\usepackage{xfrac}

\usepackage{enumitem}
\usepackage{booktabs}
\usepackage{dcolumn}
\usepackage{fancyhdr}
\usepackage{graphicx}
\usepackage{hyperref}
\hypersetup{colorlinks,allcolors=black}
\usepackage{latexsym}
\usepackage{todonotes}
\usepackage{siunitx}
\usepackage{tipa}
\usepackage{stackengine}
\usepackage{mathrsfs}
\usepackage{breqn}
\usepackage{pxfonts}
\usepackage{verbatim}

\makeatletter
\newtheoremstyle{numberfirst} % nome dello stile
  {3pt}% spazio sopra
  {3pt}% spazio sotto
  {\itshape}% corpo del teorema
  {}% indentazione
  {\bfseries}% font per titolo
  {.}% punteggiatura dopo titolo
  {0.5em}% spazio dopo titolo
  {\thmnumber{#2} \thmname{#1}\thmnote{ \textnormal{(#3)}}}% formato titolo
\makeatother

\newtheorem{theorem}{Theorem}[section]
\newtheorem{lemma}[theorem]{Lemma}
\newtheorem{proposition}[theorem]{Proposition}
\newtheorem{corollary}[theorem]{Corollary}

\theoremstyle{definition}

\newtheorem{remark}[theorem]{Remark}

\makeatletter
\renewcommand{\section}{\@startsection{section}{1}%
  \z@{2.5ex plus 1ex minus .2ex}{1.3ex plus .2ex}%
  {\normalfont\Large\scshape\centering}}

\renewcommand{\thesubsection}{\thesection.\arabic{theorem}}

\makeatletter
\let\oldsubsection\subsection
\renewcommand{\subsection}[1]{%
  \refstepcounter{theorem}% incrementa il contatore dei teoremi
  \oldsubsection*{\thesubsection\kern0.35em #1}% <-- spacing matches theorem label
  \addcontentsline{toc}{subsection}{\thesubsection\quad #1}% per l’indice
}
\makeatother

\numberwithin{equation}{section}

\newcommand{\R}{\mathbb{R}}
\newcommand{\F}{\mathcal{F}}
\newcommand{\eps}{\varepsilon}
\newcommand{\dB}{\partial B}

\DeclareMathOperator{\bari}{bar}

\newcommand{\blue}[1]{{\color{blue}#1}}

\title{Stability of the Ball in Isoperimetric Inequalities Between Two Fractional Perimeters}

\author{Giovanni Alberti\textsuperscript{1}, Giacomo Cozzi\textsuperscript{2}, Annalisa Massaccesi\textsuperscript{2}, Jeremy Mirmina\textsuperscript{1}}
\address{G. Alberti, J. Mirmina: \textsuperscript{1}Dipartimento di Matematica, Universit\`a di Pisa, Largo Bruno Pontecorvo, 5, 56127 Pisa PI, Italy}
\address{G. Cozzi, A. Massaccesi: \textsuperscript{2}Dipartimento di Matematica ``Tullio Levi-Civita'', Universit\`a degli Studi di Padova, Via Trieste, 63, 35131 Padova PD, Italy}

\begin{document}
\begin{abstract}
    We consider the isoperimetric inequality involving the $s$-perimeter and the $t$-perimeter with $0<s<t<1$, and show that the ball is a local minimizer of the (scale-invariant) isoperimetric ratio $$\F(E)\coloneqq \frac{P_t(E)^{\frac{1}{n-t}}}{ P_s(E)^{\frac{1}{n-s}}}$$ among sets $E$ that are nearly spherical. To this end, we rewrite $\F$ as a functional of $u$, where $u$ is a scalar function on the unit sphere in $\R^n$ that parametrizes $\partial E$, and prove a quantitative stability result for $\F$ around $u=0$ with respect to a suitable Sobolev norm (Theorem \ref{main theorem quantitative}).
    This parallels known results where the $s$-perimeter is replaced by the volume.
\end{abstract}
\maketitle
\setcounter{tocdepth}{1} % Only sections in ToC
\tableofcontents

\section{Introduction}
For $\alpha \in(0,1)$, the fractional $\alpha$-perimeter of a Borel set $E\subset\R^n$ is defined as the square of the $H^{\frac{\alpha}{2}}$-seminorm of the characteristic function of $E$, that is,
$$
P_\alpha(E)\coloneqq
\frac{1}{2} \int_{\mathbb{R}^n} \int_{\mathbb{R}^n} \frac{\left|\chi_E(x)-\chi_E(y)\right|^2}{|x-y|^{n+\alpha}} d x d y=\int_E \int_{E^c} \frac{d x d y}{|x-y|^{n+\alpha}}.
$$

The notion of fractional perimeters was introduced in \cite{CaffRoqSav2010,Visintin91}, and represents a \textit{nonlocal} analogue of the classical perimeter functional, as it takes into account interactions between  points in the set and points in the complement arbitrarily far apart.
The functional $P_\alpha(E)$ can be thought of as a $(n-\alpha)$-dimensional perimeter  in the sense that $P_\alpha(\lambda E) = \lambda^{n-\alpha}P_\alpha(E)$ for any $\lambda>0$. 
% parag 3: Ps vs |*|
One may see the class of fractional perimeters $\{P_\alpha\}_{\alpha\in(0,1)}$ as a collection of functionals ranging between the usual notions of volume ($\alpha=0$) and perimeter ($\alpha=1$) of a set. Indeed, let us recall that (see \cite{ambrosio2010gamma}, \cite{BouBreMir2002}, \cite{davila2002open}) for any $E\subset \R^n$ of class $C^{1,\gamma}$ for some $\gamma>0$,
\begin{equation*}
    \lim_{\alpha\to 1}\,\frac{1-\alpha}{P(B)}\,P_\alpha(E)=P(E),
\end{equation*}
where $P(E)$ denotes the classical perimeter of the set $E$. On the other hand (see \cite{maz2002bourgain}, \cite{dipierro2014strongly}),
\begin{equation*}
    \lim_{\alpha \to 0} \,\frac{\alpha}{P(B)}P_\alpha(E)=|E|,
\end{equation*}
where $|E|$ stands for the Lebesgue measure of $E$. The nonlocal analogue of the classical isoperimetric inequality relates the $\alpha$-perimeter with the volume and it is by now well understood: the problem consists in finding the optimal constant $C_{n,\alpha}$ for which it holds 
\begin{equation}
\label{alpha isoperimetric inequality}
   |E|^{\frac{1}{n}}\leq C_{n,\alpha}P_\alpha(E)^{\frac{1}{n-\alpha}} \hspace{5mm} \text{for every } E\subset \R^{n}.
\end{equation}
Using the Riesz rearrangement inequality, it can be shown that $C_{n,\alpha}$ is the constant achieved by the ball and that equality holds if and only if $E$ is the ball (see \cite[Theorem 4.1]{frakSeiringer08}).
A quantitative version of this inequality, formulated in terms of Fraenkel asymmetry (see Corollary \ref{main corollary} for the definition), was later established in \cite{FMM2011}, and subsequently refined with the sharp decay estimate in \cite{figalli2015isoperimetry}. 

% Ps vs Pt
In this paper we investigate a natural extension of \eqref{alpha isoperimetric inequality}. Namely, given $0<s<t<1$ we want to find the optimal constant $C_{n,s,t}$ for which it holds
\begin{equation}\label{eq fractional isoperimetric inequality}
     P_s(E)^{\frac{1}{n-s}}\leq C_{n,s,t}P_t(E)^{\frac{1}{n-t}} \hspace{5mm} \text{for every } E \subset \R^n.
\end{equation}
We remark that, by fractional Sobolev embeddings (see for instance Theorem 1.1 in \cite{diplipetal}), one can prove that \eqref{eq fractional isoperimetric inequality} holds for some (possibly non - optimal) constant $C_{n,s,t}$. 
The problem of finding the optimal constant in \eqref{eq fractional isoperimetric inequality} is equivalent to the problem of finding the minimum of the scale -, rotation - and translation - invariant functional,
\begin{equation}
\label{eq: def of F(E)}
    \F_{s,t}(E)\coloneqq\frac{P_t(E)^{\frac{1}{n-t}}}{P_s(E)^{\frac{1}{n-s}}},
\end{equation} namely to show that
\begin{equation*}
    \F_{s,t}(E)\geq \frac{1}{C_{n,s,t}} \hspace{5mm} \text{for every } E \subset \R^n.
\end{equation*}
When the parameters $s,t$ have been fixed, we will simply denote $\F_{s,t}$ by $\F$.
In \cite{di2015nonlocal}, it is shown that the functional $\F$ admits a minimum and that every minimizer is bounded with boundary of class $C^{1,\beta}$ for some $\beta\in(0,1)$.
We expect that balls are the unique global minimizers of $\F$ for all $0< s<t< 1$. This would mean that the sharp constant in  \eqref{eq fractional isoperimetric inequality} is $C_{n,s,t}=\F(B)^{-1}$.

\noindent The main result of the present work establishes a local version of this conjecture, namely the local stability of the ball for the functional $\F$. More precisely, we prove that balls are the unique minimizers of $\F$ among competitors that are small $C^1$ graph perturbations of a ball.
% parag definition of graph perturbation
Let us clarify what we mean by $C^1$ perturbation of a ball: let $B(y,r)$ denote the Euclidean ball centered at $y\in \R^n$, with radius $r>0$, and let $B$ denote the unit ball at the origin. We say that $E$ is a \emph{graph over the ball} $B(y,r)$ if there exists a $C^1$ function $u\colon \partial B\to \R$ such that 
\begin{equation}
\label{graph over the ball}
 E\coloneqq \{y+\lambda(1+ u(x))x \colon \lambda \in[0,r], x \in \partial B\}.
\end{equation}
In the literature these sets are also known as \textit{nearly spherical sets}.

We prove the following
\begin{theorem}\label{main theorem}
    Let $0<s<t<1$ and $n\geq2$. Then
    there exists $\eps_1>0$, depending only on $s,t,$ and $n$, such that, if $E$ is a graph over any ball via a function $u\in C^1(\partial B)$, with $\|u\|_{C^1}\leq\eps_1$, then 
    \begin{equation*}
        \F(E)\geq\F(B),
    \end{equation*}
    and equality holds if and only if $E$ is a ball.
\end{theorem}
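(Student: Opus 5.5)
We follow the Fuglede-type strategy for nearly spherical sets, as done for the fractional isoperimetric inequality with the volume. By scale, translation and rotation invariance of $\F$ we may take $y=0$ and $r=1$. We may also normalize $u$: since $\F$ is scale invariant, we may rescale so that $|E|=|B|$, and translate so that the barycenter of $E$ is at the origin. After reparametrizing, this gives a new $u$ with $\|u\|_{C^1}\lesssim\eps_1$ satisfying the linear constraints
\[
\int_{\partial B} u\,d\mathcal H^{n-1}=O(\|u\|_{L^2}^2),\qquad \int_{\partial B} u\,x\,d\mathcal H^{n-1}=O(\|u\|_{L^2}^2).
\]
Hence the projection of $u$ onto spherical harmonics of degree $0$ and $1$ is quadratically small.

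Next we expand each perimeter to second order. For $\alpha\in(0,1)$ we write $P_\alpha(E)$ as a double integral over $\partial B\times\partial B$ in polar coordinates. Following the expansion in the Fuglede-type papers for fractional perimeters (Figalli--Fusco--Maggi--Millot--Morini), we obtain
\[
P_\alpha(E)=P_\alpha(B)+a_\alpha\!\int u+b_\alpha\!\int u^2+\tfrac12[u]^2_{H^{\frac{1+\alpha}{2}}(\partial B)}-c_\alpha\!\int u^2+R_\alpha(u),
\qquad |R_\alpha|\le \omega(\|u\|_{C^1})\,\|u\|^2_{H^{\frac{1+\alpha}{2}}}.
\]
Here $\omega(\delta)\to0$ as $\delta\to0$. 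Diagonalizing in spherical harmonics $u=\sum_k\sum_i a_{k,i}Y_{k,i}$, the quadratic part becomes $\sum_k \mu_k^{\alpha}|a_k|^2$. The eigenvalues $\mu^\alpha_k$ are explicit, growing like $k^{1+\alpha}$, with $\mu_1^\alpha$ coming from translation invariance. We then expand $\F=P_t^{1/(n-t)}P_s^{-1/(n-s)}$ to second order. The first-order terms cancel, because $\int u$ enters through the scaling direction in which $\F$ is invariant. Using the volume/barycenter normalization to handle the degree $0$ and $1$ modes, we get
\[
\frac{\F(E)}{\F(B)}-1=\sum_{k\ge2}\Big(\frac{\mu_k^t}{(n-t)P_t(B)}-\frac{\mu_k^s}{(n-s)P_s(B)}\Big)|a_k|^2+\text{error},
\]
where the error is bounded by $\omega(\eps_1)\|u\|^2_{H^{\frac{1+t}{2}}}$ plus quadratically-small low modes.

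The key step, which I expect to be the main obstacle, is to prove strict positivity of the coefficients
\[
\Lambda_k:=\frac{\mu_k^t}{(n-t)P_t(B)}-\frac{\mu_k^s}{(n-s)P_s(B)}
\]
for all $k\ge2$, together with a bound $\Lambda_k\ge c\,k^{1+t}$. Both terms vanish at $k=0,1$ by invariance. For large $k$ positivity is clear, since $\mu_k^t\sim k^{1+t}$ dominates $\mu_k^s\sim k^{1+s}$. The difficulty is uniformity at intermediate $k$. I would use the explicit Gamma-function formula for $\mu_k^\alpha$ (as in Figalli et al.), with the normalized quantity $\mu_k^\alpha/\big((n-\alpha)P_\alpha(B)\big)$. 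The plan is to show that the ratio
\[
\frac{\mu_k^\alpha}{\mu_2^\alpha}\cdot\frac{\mu_2^\alpha}{(n-\alpha)P_\alpha(B)}
\]
is strictly increasing in $\alpha$ for each fixed $k\ge2$. This would be done by differentiating in $\alpha$ and exploiting log-convexity of $\Gamma$, i.e.\ monotonicity of digamma differences. An alternative is to write $\mu_k^\alpha-\mu_1^\alpha$ as an integral over $[-1,1]$ of $(1-P_k(\sigma))$ against the kernel $(1-\sigma)^{-(n-1+\alpha)/2}$, where $P_k$ is the Gegenbauer polynomial, and compare with the kernel of $\mu_1^\alpha$.

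Granting this, we have $\Lambda_k\ge c\,k^{1+t}$ for $k\ge2$. We then absorb the error for $\eps_1$ small and obtain
\[
\F(E)-\F(B)\ge c\,\|u\|^2_{H^{\frac{1+t}{2}}}\ge0
\]
for the normalized $u$. If equality holds, then the normalized $u$ vanishes, so $E$ is a ball.
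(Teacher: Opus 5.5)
The genuine gap is the step you flag and then grant: strict positivity of $\Lambda_k$ for every $k\ge 2$, with a uniform bound. This is the entire content of the coercivity estimate (Proposition~\ref{prop coercivity}), and your sketch does not settle it.

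Use the normalized coefficients $A_{\alpha,k}$ from Step~1 of that proof, for which $A_{\alpha,1}=\alpha$. In that normalization your coefficient is $\Lambda_k=\frac{1}{2n\omega_n}\bigl[(A_{t,k}-t)-(A_{s,k}-s)\bigr]$ for $k\ge2$, whether or not you insert the volume constraint before collecting the $\int u^2$ terms. Moreover $A_{\alpha,k}-\alpha=\frac{\alpha}{\alpha+1}\bigl(R_k(\alpha)-R_1(\alpha)\bigr)$, where $R_k(\alpha)=\frac{\Gamma(\frac{n-\alpha}{2})\Gamma(k+\frac{n+\alpha}{2})}{\Gamma(\frac{n+\alpha}{2})\Gamma(k+\frac{n-2-\alpha}{2})}$. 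Log-convexity of $\Gamma$ only tells you that each $R_k$ increases in $\alpha$, and that does not control the difference.

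The missing idea is to telescope in $k$. Write $A_{\alpha,k}-\alpha=\sum_{m=1}^{k-1}(A_{\alpha,m+1}-A_{\alpha,m})$, where
\[
A_{\alpha,m+1}-A_{\alpha,m}=\alpha\,\frac{n+\alpha}{2m+n-2-\alpha}\prod_{j=1}^{m-1}\frac{j+\frac{n+\alpha}{2}}{j+\frac{n-2-\alpha}{2}}.
\]
Every factor here is positive and strictly increasing in $\alpha$. This proves your monotonicity claim with no digamma analysis.

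Quantitatively, each ratio $\frac{A_{s,m+1}-A_{s,m}}{A_{t,m+1}-A_{t,m}}$ is smaller than $\frac{s(n+s)}{t(n+t)}$. Summing over $m$ gives
\[
\Lambda_k\ge\frac{1}{2n\omega_n}\Bigl(1-\frac{s(n+s)}{t(n+t)}\Bigr)(A_{t,k}-t).
\]
Since $A_{t,k}\ge A_{t,2}=\frac{2nt}{n-t}\ge 2t$, this is at least $c\,A_{t,k}$ for all $k\ge2$. This is exactly what Steps~2--3 of the paper do, except that they treat $k=2$ separately.

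With that supplied, the rest of your proposal is sound and takes a genuinely different, lighter route than the paper. You control the remainder by expanding each $P_\alpha(E)$ to second order directly. The pointwise bound $|F_{|x-y|}(1+r,1+\rho)-|x-y|^{-n-\alpha}|\le C\|u\|_{C^1}|x-y|^{-n-\alpha}$ for $r,\rho$ between $u(x)$ and $u(y)$ is the same kind of kernel estimate the paper uses for $E_2$. It gives $|R_\alpha|\le C\|u\|_{C^1}\|u\|^2_{H^{\frac{1+\alpha}{2}}}$, and composing with $\log$ then finishes the argument.

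The paper instead proves continuity of $\delta^2\F$ along rays (Propositions~\ref{d^2P_alpha(u)-d^2P_alpha(0)prop} and~\ref{prop dam-lam estimate}). That costs more work, but it also yields the Alexandrov-type estimate of Remark~\ref{remarkdegennarokubinkubin}.

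Two minor points:
\begin{itemize}
\item The individual terms of $\Lambda_k$ do not vanish at $k=0$, because neither $P_t$ nor $P_s$ is scale invariant. This is harmless, since $a_0$ and $a_1$ are $O(\|u\|^2_{L^2})$.
\item The re-centering and rescaling step is asserted rather than proved. It is routine; it is Lemma~\ref{lemma graph with different volume and baricenter}.
\end{itemize}
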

Actually, Theorem \ref{main theorem} is a consequence of a more precise statement that goes in the direction of a quantitative isoperimetric inequality:

\begin{theorem}\label{main theorem quantitative}
    Let $0<s<t<1$ and $n\geq2$. Then, there exist constants $\eps_0, c_0 >0$, depending only on $s,t,$ and $n$, such that the following holds. Let  $E$ be a set, and let $B(y,r)$ be the ball having the same barycenter and volume as $E$. If $E$ is a graph over $B(y,r)$ via a function $u$ satisfying $\|u\|_{C^1}\leq\eps_0$, then
    \begin{equation}\label{eq2 main theor}
        \mathcal{F}(E)\geq \mathcal{F}(B)+c_0\|u\|^2_{H^{\frac{1+t}{2}}}.
    \end{equation}
\end{theorem}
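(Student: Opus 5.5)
By scale and translation invariance of $\F$, we reduce to $y=0$, $r=1$; the normalization then says $|E|=|B|$ and the barycenter of $E$ is the origin. We follow the Fuglede-type strategy used for the fractional isoperimetric inequality against volume (as in \cite{FMM2011,figalli2015isoperimetry}). First we write $P_\alpha(E)$, for $\alpha\in\{s,t\}$, as a functional of $u$ via polar coordinates $x=\lambda(1+u(\theta))\theta$. Then we expand to second order around $u=0$:
\begin{equation*}
P_\alpha(E)=P_\alpha(B)+\ell_\alpha(u)+\tfrac12 Q_\alpha(u)+R_\alpha(u),\qquad |R_\alpha(u)|\le \omega(\|u\|_{C^1})\,\|u\|^2_{H^{\frac{1+\alpha}{2}}},
\end{equation*}
with $\omega(\eps)\to0$ as $\eps\to0$. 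Here $\ell_\alpha(u)=c_\alpha\int_{\partial B}u+O(\int u^2)$ comes from the constant nonlocal mean curvature of the sphere. The quadratic form $Q_\alpha$ is diagonal in spherical harmonics, with eigenvalues $\lambda_k^{\alpha}$ on degree-$k$ harmonics. These eigenvalues are computable by the Funk–Hecke formula, as in \cite{FMM2011}: $Q_\alpha(u)=\sum_k(\lambda^\alpha_k-\lambda^\alpha_1)\|a_k\|^2+\dots$, where $\lambda^\alpha_k-\lambda^\alpha_1\sim k^{1+\alpha}$. The remainder estimate is the technical part. We would obtain it, as in the cited works, by differentiating the integrand twice along $\tau u$ and controlling the singular kernel via the $C^1$ smallness.

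Next we use the constraints. The condition $|E|=|B|$ gives $\int_{\partial B}\big((1+u)^n-1\big)=0$, so $\int u=-\frac{n-1}{2}\int u^2+O(\eps\|u\|_{L^2}^2)$. The barycenter condition gives $|a_1|\lesssim \eps\|u\|_{L^2}$, where $a_1$ is the degree-one component. Substituting into $\log\F=\frac{1}{n-t}\log P_t-\frac{1}{n-s}\log P_s$ gives a linear term proportional to $\int u$. This term is replaced by a quadratic one, and it should combine with the $a_0$ contributions exactly. That cancellation reflects scale invariance: a dilation changes $\F$ by nothing. The quadratic form then becomes
\begin{equation*}
\log\F(E)-\log\F(B)\approx \sum_{k\ge2}\Big(\frac{\mu^t_k}{(n-t)P_t(B)}-\frac{\mu^s_k}{(n-s)P_s(B)}\Big)\|a_k\|^2_{L^2}+\text{(small)},
\end{equation*}
where $\mu^\alpha_k$ are the eigenvalues appearing in the second variation of $P_\alpha$ under the volume constraint. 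These vanish for $k=0,1$ by scale and translation invariance.

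The main obstacle is to show that the coefficient $d_k:=\frac{\mu^t_k}{(n-t)P_t(B)}-\frac{\mu^s_k}{(n-s)P_s(B)}$ is positive for all $k\ge2$, with $d_k\ge c\,k^{1+t}$. For large $k$ this is immediate, since $\mu^t_k\sim k^{1+t}$ dominates $\mu^s_k\sim k^{1+s}$. For small $k$ we need an explicit comparison. The plan is to write $\mu^\alpha_k/((n-\alpha)P_\alpha(B))$ through Gamma-function formulas, of the form $\frac{\Gamma(k+n-1+\alpha/2)}{\Gamma(k+\alpha/2)}\frac{\Gamma(1+\alpha/2)}{\Gamma(n+\alpha/2)}$ minus normalizations, and show that this normalized quantity is strictly increasing in $\alpha$ for each fixed $k\ge2$. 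This can be done by differentiating in $\alpha$ and using monotonicity of the digamma function: the ratio becomes a sum of $\psi$-differences over a range that grows with $k$. If the direct monotonicity fails, the fallback is to use the representation $\mu^\alpha_k\propto\int_{\partial B}\int_{\partial B}\frac{1-P_k(\theta\cdot\sigma)}{|\theta-\sigma|^{n+\alpha}}\,d\theta\, d\sigma$ and compare the kernels $|\theta-\sigma|^{-(n+\alpha)}$ directly.

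Once $d_k\ge c\,k^{1+t}$ for $k\ge2$, we have $\sum_{k\ge2}d_k\|a_k\|^2\ge c\|u\|^2_{H^{\frac{1+t}{2}}}-C(|a_0|^2+|a_1|^2)$. The terms $|a_0|^2$ and $|a_1|^2$ are $O(\eps\|u\|^2_{L^2})$ by the constraints. The remainders from both perimeters are bounded by $\omega(\eps_0)\|u\|^2_{H^{\frac{1+t}{2}}}$, using $H^{\frac{1+t}{2}}\supset H^{\frac{1+s}{2}}$-control. Choosing $\eps_0$ small absorbs all these errors. Passing from $\log\F$ back to $\F$ costs only a constant factor, which yields \eqref{eq2 main theor}.
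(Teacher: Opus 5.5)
Your outline is correct. At the level of the quadratic form it coincides with the paper. Since $\lambda_{1,\alpha}=\alpha(n-\alpha)P_\alpha(B)/P(B)$, your $\mu^\alpha_k$ equals $\lambda_{k,\alpha}-\lambda_{1,\alpha}$ in the notation of \eqref{eq lambda^a_k}. Your $d_k$ is then exactly the coefficient $-\frac{t-s}{n\omega_n}+\frac{\lambda_{k,t}}{(n-t)P_t(B)}-\frac{\lambda_{k,s}}{(n-s)P_s(B)}$ in \eqref{variazione seconda armoniche}, and its positivity for $k\ge2$ is Steps 2--3 of Proposition \ref{prop coercivity}.

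The real difference is the remainder. The paper never expands $P_\alpha$ itself. It computes $\delta^2P_\alpha$ at $u\neq0$ and proves the ray-continuity estimate of Proposition \ref{d^2P_alpha(u)-d^2P_alpha(0)prop}, where the term $E_4$ needs a delicate cancellation. It then transfers this to $\F$ in Proposition \ref{prop dam-lam estimate} and integrates $(1-\tau)\,\delta^2\F(\tau u)[u,u]$.

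Your Fuglede-type expansion is shorter, and you need not differentiate at all. In \eqref{P_alpha(B_u)}, for $r,\rho$ between $u(y)$ and $u(x)$ one has $|r-\rho|\le C\|u\|_{C^1}|x-y|$. Hence $F_{|x-y|}(1+r,1+\rho)=|x-y|^{-n-\alpha}(1+O(\|u\|_{C^1}))$, and the double integral equals $\frac12[u]^2_{H^{\frac{1+\alpha}{2}}}(1+O(\|u\|_{C^1}))$. The local term is an explicit Taylor expansion of $\int(1+u)^{n-\alpha}$. This gives the two-sided bound on $R_\alpha$ that you need: a lower bound for $P_t$ and an upper bound for $P_s$. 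In exchange, the paper's route yields a continuity estimate for $\delta^2P_\alpha$ along rays that has independent uses (Remark \ref{remarkdegennarokubinkubin}).

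Some slips to fix:

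The linear term of $\log\F$ vanishes identically, because $c_\alpha/((n-\alpha)P_\alpha(B))=1/P(B)$ does not depend on $\alpha$ (this is \eqref{first variation =0}). The volume constraint is needed only to make $a_0^2$ and the quadratic terms of $\log(1+X_\alpha)$, with $X_\alpha=(P_\alpha(E)-P_\alpha(B))/P_\alpha(B)$, of order $\eps^2\|u\|^2_{H^{\frac{1+t}{2}}}$.

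You wrote that $\mu^\alpha_0$ vanishes, but in fact $\mu^\alpha_0=-\lambda_{1,\alpha}\neq0$. This is harmless for the same reason.

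The embedding is $H^{\frac{1+t}{2}}\subset H^{\frac{1+s}{2}}$, not the reverse inclusion you wrote.

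The Gamma ratio you wrote grows like $k^{n-1}$; the correct one is $\Gamma(k+\frac{n+\alpha}{2})/\Gamma(k+\frac{n-2-\alpha}{2})$. With it, monotonicity in $\alpha$ needs no digamma function, because telescoping gives
\[
\frac{P(B)\,\mu^\alpha_k}{(n-\alpha)P_\alpha(B)}=\alpha\sum_{j=1}^{k-1}\prod_{i=0}^{j-1}\frac{i+\frac{n+\alpha}{2}}{i+\frac{n-\alpha}{2}}.
\]
Every summand is increasing in $\alpha$, so $d_k\ge(1-\frac{s}{t})\frac{\mu^t_k}{(n-t)P_t(B)}$ for all $k\ge2$. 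This gives $d_k\ge c\,k^{1+t}$ uniformly, with no separate large-$k$ argument. It is the paper's increment comparison of Step 3 extended down to $k=1$, which makes Step 2 unnecessary.
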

Theorem \ref{main theorem} in turn implies the following
\begin{corollary}
\label{main corollary}
    If $E\subset\R^n$ is as in Theorem \ref{main theorem quantitative}, then there exists $c>0$ such that  
    \begin{equation}
    \label{eq main corollary}
        \mathcal{F}(E)\geq \mathcal{F}(B)+cA(E)^2,
    \end{equation}
    where 
    \begin{equation*}
        A(E)\coloneqq \inf_{x\in\R^n}\left\{\frac{|E\triangle B_x|}{|B_x|}\colon |B_x|=|E|\right\}.
    \end{equation*}
    \end{corollary}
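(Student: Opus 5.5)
The plan is to deduce Corollary \ref{main corollary} directly from Theorem \ref{main theorem quantitative}. It suffices to bound $A(E)^2$ by a constant times $\|u\|^2_{H^{\frac{1+t}{2}}}$. By translation and scale invariance of both $\F$ and $A$, we may assume $y=0$ and $r=1$. Then $|E|=|B|$, and $E=\{\lambda(1+u(x))x\colon \lambda\in[0,1],\,x\in\partial B\}$.

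Since $A(E)$ is an infimum over admissible balls, we may test it with the particular ball $B$ itself, which has the same volume as $E$. This gives
\[
A(E)\le \frac{|E\triangle B|}{|B|}.
\]
We then compute the symmetric difference in polar coordinates:
\[
|E\triangle B|=\int_{\partial B}\Bigl|\int_1^{1+u(x)}\rho^{n-1}\,d\rho\Bigr|\,d\mathcal H^{n-1}(x)=\frac1n\int_{\partial B}\bigl|(1+u)^n-1\bigr|\,d\mathcal H^{n-1}.
\]
Since $\|u\|_{C^0}\le\eps_0\le 1/2$ (shrinking $\eps_0$ if necessary), the mean value theorem gives $|(1+u)^n-1|\le n(3/2)^{n-1}|u|$. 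Hence $|E\triangle B|\le C_n\|u\|_{L^1(\partial B)}$.

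By Cauchy--Schwarz on the compact sphere, $\|u\|_{L^1}\le|\partial B|^{1/2}\|u\|_{L^2}$. Moreover $\|u\|_{L^2}\le\|u\|_{H^{\frac{1+t}{2}}}$, since the $H^{\frac{1+t}{2}}$ norm (however it is normalized in the paper, e.g.\ via spherical harmonics with weights $(1+k(k+n-2))^{\frac{1+t}{2}}$ or as $L^2$ norm plus Gagliardo seminorm) dominates the $L^2$ norm, up to a constant depending on $n,t$. Combining these bounds yields $A(E)\le C\|u\|_{H^{\frac{1+t}{2}}}$ with $C=C(n,t)$.

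Inserting this into \eqref{eq2 main theor} gives
\[
\F(E)\ge\F(B)+c_0\|u\|^2_{H^{\frac{1+t}{2}}}\ge\F(B)+\frac{c_0}{C^2}A(E)^2,
\]
which is \eqref{eq main corollary} with $c=c_0/C^2$ for the rescaled set. The general case then follows from the invariance of $\F$ and $A$ under translations and dilations. There is no real obstacle here. The only point needing care is confirming that the paper's chosen $H^{\frac{1+t}{2}}$ norm controls the $L^2$ norm. If it were only a seminorm, one would add the observation that the barycenter and volume constraints kill the low spherical-harmonic modes up to quadratic errors, which is standard for nearly spherical sets.
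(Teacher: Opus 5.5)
Your proposal is correct and is essentially the paper's own argument. The paper deduces the corollary from Theorem \ref{main theorem quantitative} by noting that $A(E)\le |E\triangle B(y,r)|/|B(y,r)|$ is controlled by the $L^1$-norm of $u$, which in turn is controlled by $\|u\|_{H^{\frac{1+t}{2}}}$; you have simply written out the polar-coordinate computation and the Cauchy--Schwarz step, and since the paper uses the full norm $\|u\|^2_{H^{\frac{1+t}{2}}}=\|u\|^2_{L^2}+[u]^2_{H^{\frac{1+t}{2}}}$, your fallback remark about a seminorm is not needed.
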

    The quantity $A(E)$ is commonly known as the Fraenkel asymmetry of $E$ and it measures the $L^1$-distance of $E$ from a ball.
    Theorem \ref{main theorem quantitative} simply implies Corollary \ref{main corollary} from the fact that the $H^{\frac{1+t}{2}}$-norm of a function controls its $L^1$-norm. Observe moreover that the exponent $2$ in \eqref{eq2 main theor} and in \eqref{eq main corollary} is sharp. 
    An inequality of the form \eqref{eq main corollary} is the sharp isoperimetric inequality that we conjecture to hold true even when considering arbitrary sets $E\subset \R^n$. While replacing $\|u\|_{H^{\frac{1+t}{2}}}$ with the Fraenkel asymmetry weakens the inequality at the perturbative level, this choice is motivated by the geometric significance of $A(E)$ and by the fact that this is the most common and historically relevant notion of asymmetry in the literature.

\oldsubsection*{Remarks and open problems}
\begin{remark}
    Observe that \eqref{P_alpha(B_u)} implies that
    $P_\alpha(E_u)\leq C\left(\|u\|_{L^\infty}
    +[u]^2_{H^{\frac{1+\alpha}{2}}}\right)$ , for some $C>0$.
    Therefore, the conditions  $u\in H^{\frac{1+\alpha}{2}}(\partial B)$ and $\|u\|_\infty\leq\frac{1}{2}$ are sufficient to ensure that $P_\alpha(E_u)<+\infty$.
\end{remark}

\begin{remark}\label{rem constraints}
    In Theorem \ref{main theorem quantitative}, we impose the constraints on volume and barycenter in order to ensure a \textit{quantitative} estimate: the volume constraint excludes constant non-zero functions $u$, which would simply correspond to a rescaled ball (so the quantitative estimate would fail). One could remove this assumption by considering the homogeneous Sobolev seminorm instead of the full norm. However, even with this modification, the barycenter constraint would still be needed to rule out translations of the function $u$, that is, translations of the ball itself.
\end{remark}

\begin{remark}
Consider the rescaled functional
    \begin{equation*}
        \F^{*}_{s,t}(E)\coloneqq\frac{\left(\frac{1-t}{|\partial B|}P_t(E)\right)^{\frac{1}{n-t}}}{\left(\frac{s}{|\partial B|}P_s(E)\right)^{\frac{1}{n-s}}}.
    \end{equation*} 
    As we said above, when $s\to 0$ and $t\to1$, this quantity converges to
    $\text{P}(E)^{\frac{1}{n-1}}/|E|^{\frac{1}{n}}$.
    As a consequence the locally optimal constant $C_{n,s,t}$ in \eqref{eq fractional isoperimetric inequality} that we obtain when $E=B$, converges as $s\to0$ and $t\to1$ to the optimal constant in the classical isoperimetric inequality.
    A similar consideration holds true also for the single limit $s\to 0$.
\end{remark}

\begin{remark}\label{remark cicalese leonardi}
    When proving quantitative isoperimetric inequalities, the typical ingredients are the following  (see for example \cite{fuglede1989stability}, \cite{fusco2008sharp}):
    \begin{enumerate}[label=$(\roman*)$]
        \item a \textit{strict qualitative inequality}, showing that the ball is the unique minimizer;
        \item a reduction to the small asymmetry regime, establishing that it is sufficient to prove \eqref{eq main corollary} for sets whose $L^1$-distance from a ball is small;
        \item a \textit{Fuglede-type stability result}, showing  the quantitative isoperimetric inequality in the class of ``nearly spherical'' sets, \textit{i.e.}, sets that are $C^1$ graphs over the unit ball.
    \end{enumerate}
    From this, and also applying a selection principle in the spirit of \cite{CicaleseLeonardi-SelectionPrinciple}, one might get the global quantitative inequality in the form of \eqref{eq main corollary}.
    Theorem \ref{main theorem quantitative} represents step $(iii)$ in the above program.
\end{remark}

\begin{remark}
    Unlike the classical case, fractional isoperimetric inequalities remain  meaningful even in dimension $1$, both for $P_t$ versus $P_s$ and for $P_t$ versus volume. 
    It would be interesting to understand if points $(i)$ and $(ii)$ of Remark \ref{remark cicalese leonardi} could be more accessible. 
\end{remark}
\begin{remark}
Observe that symmetrization techniques, which are classical tools in the proof of isoperimetric inequalities and in the characterization of balls as optimizers, are still effective when dealing with the inequality between the $s$-perimeter and the volume. Indeed, rearrangement preserves the volume while decreasing the $s$-perimeter. 
By contrast, for the inequality \eqref{eq fractional isoperimetric inequality}, these techniques are no longer helpful, since both the $s$ and $t$ perimeters decrease under rearrangement and it is not possible to determine whether the $s$-perimeter decreases ``more'' than the $t$-perimeter.
\end{remark}
\textbf{Sketch of the proof.}
Our approach relies on a second-order analysis of the functional $\F$. 
Two key estimates play a central role: the coercivity (Proposition \ref{prop coercivity}) and a weak form of continuity for the second variation (Proposition \ref{prop dam-lam estimate}).

The coercivity estimate is established in the $H^{\frac{1+t}{2}}$-norm, which is the natural and optimal norm for the problem. In contrast, the continuity estimate requires additional control in the stronger $C^1$-norm. More precisely, we obtain a bound of the form
\begin{equation*}
|\delta^2\F(u)[u,u]-\delta^2\F(0)[u,u]|\leq C_{n,s,t} \,\|u\|_{C^1}\|u\|^2_{H^{\frac{1+t}{2}}}.    
\end{equation*} 
This discrepancy between the norms is not specific to our setting, but a common feature in shape-variation problems. For example, it already appears for the classical perimeter, which is naturally differentiable in $W^{1,\infty}$, whereas coercivity may hold only in the $H^1$-norm. 
A detailed and clear discussion of this phenomenon can be found in \cite{dambrine2019stability}.
\\

\textbf{Organization of the paper.}
Section \ref{preliminaries} provides some preliminary tools and presents a useful formula describing the Sobolev norm of a function defined on the sphere, in terms of its spherical harmonic decomposition.
In Section \ref{section first and second variations} we explicitly compute the first and second variations of the functional $\F$ in a neighborhood of the origin. 
The two key estimates of  \textit{coercivity} and \textit{continuity along rays} for the second variation of $\F$ are established in Sections \ref{section coercivity} and \ref{section continuity}, respectively. Finally, we provide the proofs of Theorem \ref{main theorem} and Theorem \ref{main theorem quantitative} in Section \ref{section geometric results}.

\section{Preliminaries}
\label{preliminaries}

Let $B$ denote the closed unit ball in $\R^n$, and $\partial B$ its boundary. For $\alpha\in(0,1)$, we define the fractional Sobolev space $H^\alpha(\partial B)$ as
\begin{equation*}
    H^{\alpha}(\partial B)=\left\{u\in L^2(\partial B)\colon [u]^2_{H^\alpha(\partial B)} <+\infty\right\},
\end{equation*}
where $[u]^2_{H^\alpha(\partial B)}$ is the Gagliardo seminorm of $u$ and it is defined as
\begin{equation*}
    [u]^2_{H^\alpha(\partial B)}:=\int_{\partial B \times \partial B} \frac{(u(x)-u(y))^2}{|x-y|^{n-1+2\alpha}}d\sigma_xd\sigma_y.
\end{equation*}
Here $d\sigma$ denotes the surface measure on the sphere.
$H^\alpha(\partial B)$ is a Hilbert space if endowed with the norm
\begin{equation*}
    \|u\|_{H^{\alpha(\partial B)}}\coloneqq \left(\|u\|^2_{L^2(\partial B)}+[u]^2_{H^\alpha(\partial B)}\right)^{1/2}.
\end{equation*}
It is well known that, for $0 < s < t < 1$, the embedding $
H^t(\partial B) \hookrightarrow H^s(\partial B)$ is continuous,
meaning that $H^t(\partial B)\subset H^s(\partial B)$ and it holds $\|u\|_{H^s(\partial B)} \le C \, \|u\|_{H^t(\partial B)}$ for a constant $C$ depending only on $n,s,t$. For general references on fractional Sobolev spaces, see for instance \cite{leoni-fractionalsobolevbook,HitchhikerfractionalSobolev}.

We now turn our attention to spherical harmonics and Sobolev functions on the sphere. \label{Spherical harmonics and Sobolev functions on the sphere}
For $k \in \mathbb{N}$, let $d(k)$ denote the dimension of the space of homogeneous harmonic polynomials of degree $k$ over $\R^n$, namely the set of polynomials $P$ of the form
\begin{equation*}
    P(x)=\sum_{j=0}^{J}\, c_{j}\, x^{\alpha_j},\hspace{5mm} \Delta P(x)=0, 
\end{equation*}
with $J\in \mathbb{N}$ and $\alpha_j$ a multi-index such that $|\alpha_j|=k$ for any $j\in \{0,\ldots,J\}$. In particular, $d(0)=1$ and $d(1)=n$.
The spherical harmonics of degree $k$ are the collection $\mathcal{S}_k= \left\{Y^i_k\right\}_{i=1}^{d(k)}$, obtained as the restriction to $\partial B$ of an orthonormal basis (with respect to the inner product on $\partial B$) of the homogeneous polynomials of degree $k$ on $\R^n$ (notice that, therefore, $Y_0^1=|\partial B|^{-1/2}$). One can prove that $\{\mathcal{S}_k\}_{k\in \mathbb{N}}$ is an orthonormal basis for $L^2(\partial B)$.
For a given $u\colon \partial B \to \R$ of class $L^2$, let us denote by $a^i_k(u)$ the Fourier coefficient of $u$ corresponding to $Y^i_k$, namely $a^i_k(u)=\int_{\partial B } u(x)Y^i_k(x)\, dx$. Then, we can write
\begin{equation*}
    u(x)=\sum_{k\in \mathbb{N}}\sum_{i=1}^{d(k)} a_k^i(u)Y^i_k(x),
\end{equation*}
and 
\begin{equation*}
\begin{split}
    \|u\|^2_{L^2(\partial B)}&=\sum_{k\in \mathbb{N}}\sum_{i=1}^{d(k)} a_k^i(u)^2,\\
    \int_{\partial B} u(x)\,d\sigma&=\sum_{k\in \mathbb{N}}\sum_{i=1}^{d(k)}a_k^i(u)=|\partial B|^{\frac{1}{2}}a^1_0(u).
\end{split}
\end{equation*}
Moreover, following \cite{figalli2015isoperimetry}, we have that
\begin{equation}
    \label{SobolevNorm}
    [u]^2_{H^{\frac{1+\alpha}{2}}(\partial B)}=\sum_{k=0}^{\infty}\sum_{i=1}^{d(k)}\lambda_{k,\alpha} \,a^i_k(u)^2,
\end{equation}
where
\begin{equation}
    \label{autovalori seminorma sphere}
   \lambda_{k,\alpha} = \frac{2^{1-\alpha}\pi^{\frac{n-1}{2}}}{1+\alpha}\frac{\Gamma\left(\frac{1-\alpha}{2}\right)}{\Gamma\left(\frac{n+\alpha}{2}\right)}\left(\frac{\Gamma\left(k+\frac{n+\alpha}{2}\right)}{\Gamma\left(k+\frac{n-2-\alpha}{2}\right)}-\frac{\Gamma\left(\frac{n+\alpha}{2}\right)}{\Gamma\left(\frac{n-2-\alpha}{2}\right)}\right),
\end{equation}
and $\Gamma$ is the Euler Gamma function.

From now on, and for the rest of the paper, we will consider sets $E$ that are graph perturbations of the unit ball $B$ of some function $u\in C^1(\partial B)$. Recalling definition \eqref{graph over the ball}, this means that sets we will consider are of the form 
\begin{equation*}
 E\coloneqq \{\lambda(1+ u(x))x \colon \lambda \in[0,1], x \in \partial B\}.
\end{equation*}
When $u$ is given, we will call such a set $E_u$.

In the following lemma we show that, for any function $u$ with the property that the perturbed set $E_u$ has its barycenter at the origin and the same volume as the unit ball, the coefficients $a_0^0(u)$ and $a_1^i(u)$ for $0\leq i \leq n$ can be controlled by the square of the $L^2$-norm of $u$.

\begin{lemma}\label{lemma control a_0 and a_1 with L_2 norm}
    Let $u\in L^\infty(\partial B)$ with $\|u\|_{\infty}\leq 1/2$. There exist constants $C_0(n),C_1(n)>0$ such that
    \begin{enumerate}[label=(\roman*)]
        \item If $|E_u|=|B|$, then $\left|a^1_0(u)\right| \leq C_0(n)\|u\|^2_{L^2}$;    
        \item If $E_u$ has barycenter in the origin, then $\left|a^i_1(u)\right| \leq C_1(n)\|u\|^2_{L^2}$, for any $1\leq i \leq n$.
    \end{enumerate}
\end{lemma}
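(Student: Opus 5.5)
Both parts follow from writing volume and first moments of $E_u$ in polar coordinates and Taylor expanding the resulting polynomial in $u$. Since $E_u=\{\rho x:\ 0\le \rho\le 1+u(x),\ x\in\partial B\}$ and $1+u\geq 1/2>0$, polar coordinates give
\begin{equation*}
|E_u|=\int_{\partial B}\int_0^{1+u(x)}\rho^{n-1}\,d\rho\,d\sigma_x=\frac{1}{n}\int_{\partial B}(1+u(x))^n\,d\sigma_x ,
\end{equation*}
and, for the first moments,
\begin{equation*}
\int_{E_u} y\,dy=\int_{\partial B}x\int_0^{1+u(x)}\rho^{n}\,d\rho\,d\sigma_x=\frac{1}{n+1}\int_{\partial B}x\,(1+u(x))^{n+1}\,d\sigma_x .
\end{equation*}

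For (i), I expand $(1+u)^n=1+nu+R_n(u)$. The remainder satisfies $|R_n(u)|\le C(n)u^2$ whenever $|u|\le 1/2$, for instance by Taylor's formula with $|1+\theta u|\le 3/2$. Since $|B|=|\partial B|/n$, the constraint $|E_u|=|B|$ becomes
\begin{equation*}
n\int_{\partial B}u\,d\sigma=-\int_{\partial B}R_n(u)\,d\sigma .
\end{equation*}
The paper's formula gives $\int_{\partial B}u\,d\sigma=|\partial B|^{1/2}a_0^1(u)$. Hence
\begin{equation*}
|a_0^1(u)|\le \frac{C(n)}{n|\partial B|^{1/2}}\|u\|_{L^2}^2 .
\end{equation*}

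For (ii), the barycenter condition $\int_{E_u}y\,dy=0$ implies $\int_{\partial B}x\,(1+u)^{n+1}\,d\sigma=0$. I expand $(1+u)^{n+1}=1+(n+1)u+R_{n+1}(u)$ with $|R_{n+1}(u)|\le C(n)u^2$. The zeroth-order term vanishes because $\int_{\partial B}x\,d\sigma=0$ by symmetry. Using $|x|=1$, this leaves
\begin{equation*}
\Bigl|\int_{\partial B}x\,u(x)\,d\sigma_x\Bigr|\le \frac{C(n)}{n+1}\|u\|_{L^2}^2 .
\end{equation*}
The degree-one spherical harmonics span $\{x_1,\dots,x_n\}$ restricted to the sphere. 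Since $\int_{\partial B}x_ix_j\,d\sigma=\delta_{ij}|\partial B|/n$, one can take $Y_1^i=c_n x_i$ with $c_n=(n/|\partial B|)^{1/2}$. Any other orthonormal basis of this space is an orthogonal change of coordinates, which does not affect the bound up to a factor $\sqrt n$. Therefore $a_1^i(u)=c_n\int_{\partial B}x_iu\,d\sigma$, and (ii) follows.

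None of the steps is a real obstacle. The points needing care are the uniform bound on the Taylor remainder, which is where $\|u\|_\infty\le 1/2$ is used, and the identification of $a_1^i$ with the first moments of $u$.
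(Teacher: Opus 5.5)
Your proof is correct and follows essentially the same route as the paper. Both write the volume and first moments of $E_u$ in polar coordinates as $\frac{1}{n}\int_{\partial B}(1+u)^n\,d\sigma$ and $\frac{1}{n+1}\int_{\partial B}x(1+u)^{n+1}\,d\sigma$, Taylor-expand to second order using $\|u\|_\infty\le 1/2$ to bound the remainder by $C(n)u^2$, and then identify $a_0^1(u)$ and $a_1^i(u)$ with $|\partial B|^{-1/2}\int_{\partial B} u\,d\sigma$ and $c(n)\int_{\partial B} x_i u\,d\sigma$. The differences are cosmetic: you bound the Taylor remainder pointwise instead of expanding the scalar function $t\mapsto\int_{\partial B}[(1+tu)^n-1]\,d\sigma$, and you state explicitly two facts the paper uses tacitly, namely that $\int_{\partial B}x\,d\sigma=0$ and how the choice of orthonormal basis for the degree-one harmonics is handled.
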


\begin{proof}
    We prove {\it (i)}. Assume that $|E_u|=|B|$ and recall that 
\begin{equation}\label{eq: int(1+u)^n-1=0}
    \int_{\partial B}(1+u)^n \, d\sigma=n|E_u|=n|B|=\mathcal{H}^{n-1}(\partial B).
\end{equation}
We consider the function
\begin{equation*}
f(t)=\int_{\partial B}[(1+tu(x))^n-1]\,d\sigma.
\end{equation*}
Expanding $f$ via Taylor near $t=0$, we obtain
\begin{equation*}
    f(1)=n\int_{\partial B} u(x)d\sigma+\frac{n(n-1)}{2}\int_{\partial B}(1+\lambda u(x))^{n-2}u(x)^2d\sigma,
\end{equation*}
for a certain $\lambda \in (0,1)$.
By \eqref{eq: int(1+u)^n-1=0} we have $f(1)=0$, therefore
\begin{equation*}
    \int_{\partial B} u(x)d\sigma=-\frac{n-1}{2}\int_{\partial B}(1+\lambda u(x))^{n-2}u(x)^2d\sigma.
\end{equation*}
From this we deduce
\begin{equation*}
        \left|a^1_0(u)\right|=\frac{1}{|\partial B|^{\frac{1}{2}}}\left|\int_{\partial B}u(x)\,d\sigma\right|\leq \frac{(n-1)}{2|\partial B|^{\frac{1}{2}}}(1+\|u\|_{\infty})^{n-2}\|u\|^2_{L^2}\leq C_0(n)\|u\|^2_{L^2}.
\end{equation*}
Analogously, we prove {\it (ii)}. Assume that the barycenter of $E_u$ is in the origin, that is
$\int_{E_u}x \,dx=0$.
For any $i\in d(1)$, one has
\begin{equation}\label{eq: xint(1+u)^n+1=0}
    0=\text{bar}^i(E_u)=\frac{1}{n+1}\int_{\partial B} x_i (1+u(x))^{n+1}\, d\sigma.
\end{equation}
As before, consider the function
\begin{equation*}
    f(t)=\int_{\partial B} x_i (1+tu(x))^{n+1}\, d\sigma,
\end{equation*}
and expand it via Taylor near $t=0$. We obtain
\begin{equation*}
    0=f(1)=(n+1)\int_{\partial B}x_i\,u(x)\, d\sigma+\frac{n(n+1)}{2}\int_{\partial B}x_i(1+\lambda u(x))^{n-1}u(x)^2d\sigma.
\end{equation*}
Hence,
\begin{equation*}
\left|\int_{\partial B}x_i\,u(x)\, d\sigma\right|=\left|\frac{n}{2}\int_{\partial B}x_i(1+\lambda u(x))^{n-1}u(x)^2d\sigma\right|\leq C(n)\|u\|^2_{L^2}.
\end{equation*}
Finally, by definition, for any $0\leq i \leq n$, the spherical harmonic of degree one $Y_1^{i}$ is such that $Y_1^{i}(x)=c(n)x_i$. Therefore,
\begin{equation*}
    \left|a_1^i(u)\right|=\left|\int_{\partial B}Y^i_1(x)\,u(x)\, d\sigma\right|\leq C_1(n)\|u\|^2_{L^2}.\qedhere
\end{equation*}
\end{proof}

\section{Asymptotics of the fractional perimeter near the ball}\label{section first and second variations}

In this section we explicitly compute the first and second variations of the functional of the fractional perimeter $P_\alpha$, which we use to derive the corresponding variations of the functional $\F$ near the unit ball. All the formulas of this section hold under the minimal assumptions of $u\in H^{\frac{1+t}{2}}$ and $\|u\|_{\infty}\leq \frac{1}{2}.$ The first assumption guarantees that $P_\alpha(u)<\infty$, and the second that $E_u$ is a well-defined set.
With a slight abuse of notation we define the functional $P_\alpha(u)\coloneqq P_\alpha(E_u)$ for $\alpha\in(0,1)$ and $\F(u)\coloneqq \F(E_u)$.

First, 
using polar coordinates and rearranging terms (see \cite{figalli2015isoperimetry}),
we can represent the $\alpha$-Perimeter of a nearly spherical set as follows 
\begin{equation}
\begin{split}
    \label{P_alpha(B_u)}
    P_\alpha(u)=&\frac{P_\alpha(B)}{P(B)}\int_{\partial B} (1+u)^{n-\alpha}d\sigma\\
    &+\frac{1}{2}\int_{\partial B \times \partial B}\left(\int_{u(y)}^{u(x)}\int_{u(y)}^{u(x)}F_{|x-y|}(1+r,1+\rho)\,dr \,d\rho\right) \,d\sigma_x\,d\sigma_y,
\end{split}
\end{equation}
where
\begin{equation}
    \label{F_|x-y|}
    F_{|x-y|}(r,\rho)=\frac{(r\rho)^{n-1}}{|rx-\rho y|^{n+\alpha}}=\frac{(r\rho)^{n-1}}{((r-\rho)^2+r\rho|x-y|^{2})^{\frac{n+\alpha}{2}}}.
\end{equation}

\subsection{First variation of $P_\alpha$}

 The first variation of $P_\alpha(u)$ has already been computed in \cite[Lemma 2.1]{degenn2023asymptotic}. We include the computation here for completeness. Recall that, for any $\varphi\in C^1(\partial B)$
    \begin{equation*}
    \delta P_\alpha(u)[\varphi]=\frac{d}{d\lambda}\bigg|_{\lambda=0} P_\alpha(u+\lambda\varphi).
    \end{equation*}
     By differentiating the first term in \eqref{P_alpha(B_u)} under the integral sign, we obtain
\begin{equation*}
    \frac{d}{d\lambda}\bigg|_{\lambda=0}\frac{P_\alpha(B)}{P(B)}\int_{\partial B} (1+u(x)+\lambda\varphi(x))^{n-\alpha}\,d\sigma=\frac{P_\alpha(B)}{P(B)}(n-\alpha)\int_{\partial B} \varphi(x)(1+u(x))^{n-\alpha-1}\,d\sigma.
\end{equation*}
For the second term in \eqref{P_alpha(B_u)}, 
\begin{equation*}
    \begin{split}
   &\frac{1}{2}\frac{\partial}{\partial\lambda}\bigg|_{\lambda=0}\int_{\partial B \times \partial B}\bigg(\int_{u(y)+\lambda\varphi(y)}^{u(x)+\lambda\varphi(x)}
\int_{u(y)+\lambda\varphi(y)}^{u(x)+\lambda\varphi(x)}F_{|x-y|}(1+r,1+\rho)d\rho\,dr\bigg)d\sigma_xd\sigma_y\\
=&\frac{1}{2}\int_{\partial B \times \partial B}\left(\int_{u(y)}^{u(x)}
\left( \varphi(x)F_{|x-y|}(1+r,1+u(x))-
\varphi(y)F_{|x-y|}(1+r,1+u(y)) \right)dr\right)\,d\sigma_x\,d\sigma_y\\
&+ \frac{1}{2}\int_{\partial B \times \partial B}\left(\int_{u(y)}^{u(x)}\frac{\partial}{\partial \lambda}\bigg|_{\lambda=0}\int_{u(y)+\lambda\varphi(y)}^{u(x)+\lambda\varphi(x)}F_{|x-y|}(1+r,1+\rho)d\rho\right)d\sigma_x\,d\sigma_y
\\
=&\frac{1}{2}\int_{\partial B \times \partial B}\left(\int_{u(y)}^{u(x)}
\left(\varphi(x)F_{|x-y|}(1+r,1+u(x))-
\varphi(y)F_{|x-y|}(1+r,1+u(y)) \right)dr\right)d\sigma_x\,d\sigma_y\\
&+\frac{1}{2}\int_{\partial B \times \partial B}\left(\int_{u(y)}^{u(x)}
\left(\varphi(x)F_{|x-y|}(1+u(x),1+\rho)-
\varphi(y)F_{|x-y|}(1+u(y),1+\rho)\right)d\rho\right)d\sigma_x\,d\sigma_y\\
=&2 \int_{\partial B \times \partial B}\left(\int_{u(y)}^{u(x)}
\varphi(x)F_{|x-y|}(1+r,1+u(x))dr\right)d\sigma_x\,d\sigma_y.
\end{split}
\end{equation*}
The last equality follows from the symmetries of the domain $\partial B\times\partial B$ and of the function $F$, namely 
$$F_{|x-y|}(r,\rho)=F_{|x-y|}(\rho,r)=F_{|y-x|}(r,\rho).$$
Hence,
\begin{equation}\label{first variation of P_alpha}
\begin{split}
\delta P_\alpha(u)[\varphi] 
= & (n-\alpha)\frac{P_{\alpha}(B)}{P(B)} 
   \int_{\partial B} \varphi(x)(1+u(x))^{n-\alpha-1}\,d\sigma\\
   &+ 2 \int_{\partial B \times \partial B}\left(\int_{u(y)}^{u(x)} 
      \varphi(x)F_{|x-y|}\left(1+u(x),1+\rho\right)d\rho\right)d\sigma_x\,d\sigma_y.
\end{split}
\end{equation}

\subsection{Second variation of $P_\alpha$}
By differentiating \eqref{first variation of P_alpha}, for any $\varphi,\psi\in C^1(\partial B)$ we obtain 
\begin{equation}\label{second variation of P_alpha}
    \begin{split}
         \delta^2 &P_\alpha(u)[\varphi,\psi]=(n-\alpha)
        (n-\alpha-1)\frac{P_{\alpha}(B)}{P(B)}\int_{\partial B} \varphi(x)\psi(x)(1+u(x))^{n-\alpha-2}\,d\sigma\\       
        &+2\int_{\partial B \times \partial B}\left(\varphi(x)\psi(x)F_{|x-y|}(1+u(x),1+u(x))-\varphi(x)\psi(y)F_{|x-y|}(1+u(y),1+u(x))\right)\,d\sigma_x\,d\sigma_y\\&+2\int_{\partial B \times \partial B}\varphi(x)\psi(x)\left(\int_{u(y)}^{u(x)}\partial_1F_{|x-y|}(1+u(x),1+\rho)d\rho\right)\,d\sigma_x\,d\sigma_y.
        \end{split}
    \end{equation}

\subsection{First variation of $\F$.}
Recalling the expression of $\F$ in \eqref{eq: def of F(E)}, we obtain
\begin{equation}\label{first variation of F}
\begin{split}
    \delta \F(u)[\varphi]&=\frac{1}{n-t}\frac{P_t(u)^{\frac{1}{n-t}-1}}{P_s(u)^{\frac{1}{n-s}}}\delta P_t(u)[\varphi]-\frac{1}{n-s}\frac{P_t(u)^{\frac{1}{n-t}}}{P_s(u)^{\frac{1}{n-s}-1}}\delta P_s(u)[\varphi]\\
    &=\F(u)\left(\frac{\delta P_t(u)[\varphi]}{(n-t)P_t(u)}-\frac{\delta P_s(u)[\varphi]}{(n-s)P_s(u)}\right).
\end{split}
\end{equation}
We observe that the unit ball is a critical point of this functional. Indeed, setting $u=0$, equation \eqref{first variation of P_alpha} reduces to
\begin{equation}\label{first var of P_alpha at zero}
    \delta P_\alpha(0)[\varphi]=(n-\alpha)\frac{P_{\alpha}(B)}{P(B)}\int_{\partial B} \varphi(x).
\end{equation}
Substituting \eqref{first var of P_alpha at zero} into \eqref{first variation of F}, we obtain that \begin{equation}\label{first variation =0}
    \delta  \F(0)[\varphi]=0,
\end{equation}
for every $\varphi\in C^1(\partial B)$.

\subsection{ Second variation of $\F$.} Differentiating equation \eqref{first variation of F}, we get
\begin{equation}\label{second variation of F}
\begin{split}
    &\delta^2 \F(u)[\varphi,\psi]=\F(u)\left(\frac{\delta P_t(u)[\varphi]}{(n-t)P_t(u)}-\frac{\delta P_s(u)[\varphi]}{(n-s)P_s(u)}\right)\left(\frac{\delta P_t(u)[\psi]}{(n-t)P_t(u)}-\frac{\delta P_s(u)[\psi]}{(n-s)P_s(u)}\right)\\
    &+\F(u)\left(\frac{\delta^2 P_t(u)[\varphi,\psi]}{(n-t)P_t(u)}-\frac{\delta^2P_s(u)[\varphi,\psi]}{(n-s)P_s(u)}-\frac{\delta P_t(u)[\varphi]\,\delta P_t(u)[\psi]}{(n-t)P_t(u)^2}+\frac{\delta P_s(u)[\varphi]\,\delta P_s(u)[\psi]}{(n-s)P_s(u)^2}\right).
\end{split}
\end{equation}
Evaluating equation \eqref{second variation of P_alpha} at $u=0$ and $\psi=\varphi$, we get
\begin{equation}\label{second variation of P_alpha at 0}
 \delta^2 P_\alpha(0)[\varphi,\varphi]=
 (n-\alpha)(n-\alpha-1)\frac{P_\alpha(B)}{P(B)}\|\varphi\|_{L^2}^2+[\varphi]^2_{H^{\frac{1+\alpha}{2}}}\leq C(n,\alpha)\|\varphi\|^2_{H^{\frac{1+\alpha}{2}}},
\end{equation}
where $C(n,\alpha)$ is a constant depending only on $n$ and $\alpha$. Then, substituting \eqref{first var of P_alpha at zero} and \eqref{second variation of P_alpha at 0} in \eqref{second variation of F} we get
\begin{equation}\label{second var F at zero}
\begin{split}
    \delta^2 \F(0)[\varphi,\varphi]=\F(0)\left(\frac{[\varphi]^2_{H^{\frac{1+t}{2}}(\partial B)}}{(n-t)P_t(B)}-\frac{[\varphi]^2_{H^{\frac{1+s}{2}}(\partial B)}}{(n-s)P_s(B)}-(t-s)\left(\fint_{\partial B}\varphi^2-\left(\fint_{\partial B} \varphi \right)^2\right)\right).
\end{split}
\end{equation}

\section{Coercivity of the second variation of \texorpdfstring{$\F$}{F}}\label{section coercivity}

The main result is a coercivity result for $\F$ (Proposition \ref{prop coercivity}).
Our proof is based on an explicit computation using the decomposition of $u$ into spherical harmonics.

\begin{proposition}\label{prop coercivity}
Let $u\in H^{\frac{1+t}{2}}(\partial B)$ with $\|u\|_{\infty}\leq\frac{1}{2}$ be such that $|E_u|=|B|$ and $\bari(E_u)=0$. Then there exist two constants $c=c(n,s,t)>0$, and $\eps_0=\eps_0(n)$ such that, if $\|u\|_{L^2}\leq \eps_0$,
\begin{equation}\label{eq:coercivity}
        \frac{1}{2}\delta^2\F(0)[u,u]\geq c\,\|u\|^2_{H^{\frac{1+t}{2}}}.
\end{equation}
\end{proposition}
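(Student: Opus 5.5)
The plan is to diagonalize $\delta^2\F(0)$ in spherical harmonics, show its eigenvalues vanish on degrees $k=0,1$ and are positive and comparable to $\lambda_{k,t}$ for $k\geq2$, and then handle the low modes with Lemma \ref{lemma control a_0 and a_1 with L_2 norm}.

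\emph{Step 1: diagonalization.} Write $u=\sum_{k,i}a_k^i Y_k^i$ and insert this into \eqref{second var F at zero}, using \eqref{SobolevNorm}. The term $\fint\varphi^2-(\fint\varphi)^2$ equals $|\partial B|^{-1}\sum_{k\geq1}\sum_i (a_k^i)^2$, so it removes the $k=0$ mode. Since $\lambda_{0,\alpha}=0$ by \eqref{autovalori seminorma sphere}, we obtain
\begin{equation*}
\tfrac{1}{2}\delta^2\F(0)[u,u]=\tfrac{\F(B)}{2}\sum_{k\geq1}\mu_k\sum_{i=1}^{d(k)}(a_k^i)^2,
\qquad
\mu_k=\frac{\lambda_{k,t}}{(n-t)P_t(B)}-\frac{\lambda_{k,s}}{(n-s)P_s(B)}-\frac{t-s}{|\partial B|}.
\end{equation*}

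\emph{Step 2: normalization via translations.} To handle the constants $P_\alpha(B)$, I would use the identity $\lambda_{1,\alpha}=\alpha(n-\alpha)P_\alpha(B)/P(B)$. It expresses that translations are null directions for $P_\alpha$ under the volume constraint: take the second variation of $P_\alpha(u)-\Lambda|E_u|$ with Lagrange multiplier $\Lambda=(n-\alpha)P_\alpha(B)/|B|\cdot\frac1n$, coming from \eqref{first var of P_alpha at zero}. On degree-one harmonics it must vanish, which gives $(n-\alpha)(n-\alpha-1)P_\alpha(B)/P(B)+\lambda_{1,\alpha}-(n-\alpha)(n-1)P_\alpha(B)/P(B)=0$. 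Alternatively, this identity can be verified directly from \eqref{autovalori seminorma sphere} and the explicit Gamma-function value of $P_\alpha(B)$.

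Setting $R_k(\alpha)\coloneqq\lambda_{k,\alpha}/\lambda_{1,\alpha}$, the identity gives
\begin{equation*}
\mu_k=\frac{1}{|\partial B|}\Big(t\,(R_k(t)-1)-s\,(R_k(s)-1)\Big).
\end{equation*}
In particular $\mu_1=0$, as translation invariance demands.

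\emph{Step 3 (main obstacle): positivity and growth of $\mu_k$ for $k\geq2$.} It suffices to show that $\alpha\mapsto R_k(\alpha)$ is nondecreasing on $(0,1)$ for each $k\geq2$. Since $R_k>1$, the map $\alpha\mapsto\alpha(R_k(\alpha)-1)$ is then strictly increasing, so $\mu_k>0$. Writing $G_k(\alpha)=\Gamma(k+\frac{n+\alpha}{2})/\Gamma(k+\frac{n-2-\alpha}{2})$, the prefactor of \eqref{autovalori seminorma sphere} cancels, and
\begin{equation*}
R_k(\alpha)=\frac{G_k(\alpha)-G_0(\alpha)}{G_1(\alpha)-G_0(\alpha)}.
\end{equation*}
I would first try to prove monotonicity by differentiating $\log G_k$ in $\alpha$, which yields $\tfrac12\big(\psi(k+\frac{n+\alpha}{2})+\psi(k+\frac{n-2-\alpha}{2})\big)$ with $\psi$ the digamma function. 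This quantity is increasing in $k$. One then uses the telescoping structure $G_{k+1}/G_k=\frac{(k+\frac{n+\alpha}{2})}{(k+\frac{n-2-\alpha}{2})}$ to write $R_k$ as a sum of ratios, each increasing in $\alpha$. If this route fails, a fallback is to treat large $k$ via Stirling and small $k$ separately.

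For the comparison with $\lambda_{k,t}$: the asymptotics $\lambda_{k,\alpha}\sim c_\alpha k^{1+\alpha}$ give $\mu_k/\lambda_{k,t}\to 1/((n-t)P_t(B))>0$ as $k\to\infty$. Combined with $\mu_k>0$ for every $k\geq2$, this yields $\mu_k\geq c'(1+\lambda_{k,t})$ for all $k\geq2$.

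\emph{Step 4: low modes.} From Steps 1–3, $\tfrac12\delta^2\F(0)[u,u]\geq c''\sum_{k\geq2}(1+\lambda_{k,t})\sum_i (a_k^i)^2$. Adding and subtracting the $k=0,1$ modes, and noting that $\lambda_{0,t}=0$ and $\lambda_{1,t}$ is a fixed constant, this is at least $c''\|u\|^2_{H^{\frac{1+t}{2}}}-C\big(|a_0^1|^2+\sum_i|a_1^i|^2\big)$. By Lemma \ref{lemma control a_0 and a_1 with L_2 norm}, the subtracted term is bounded by $C\|u\|_{L^2}^4\leq C\eps_0^2\|u\|^2_{L^2}$. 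Choosing $\eps_0$ small, this term is absorbed, giving \eqref{eq:coercivity}.
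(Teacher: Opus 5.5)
Your proposal is correct and is essentially the paper's proof. The shared structure is the spherical-harmonic diagonalization and the vanishing of the degree-one coefficient; your translation-invariance derivation of $\lambda_{1,\alpha}=\alpha(n-\alpha)P_\alpha(B)/P(B)$ is the paper's $A_{\alpha,1}=\alpha$. Positivity for $k\geq 2$ comes from the telescoping identity $\frac{G_{j+1}-G_j}{G_1-G_0}=\prod_{i=0}^{j-1}\frac{2i+n+\alpha}{2i+n-\alpha}$, each factor increasing in $\alpha$; this is exactly the paper's formula \eqref{eq: Ak+1-Ak} and works, so no fallback is needed. Your asymptotic argument for uniformity replaces the paper's explicit constant $c_0=\max\{c_1,c_2\}$, and your Step 4 is the paper's Step 4, done a bit more carefully since you also track the $k=0$ mode and the $L^2$ part of the norm.

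One minor point on constants. As written, absorbing the degree-one term, which carries the weight $1+\lambda_{1,t}$, makes your $\eps_0$ depend on $t$. To get $\eps_0=\eps_0(n)$ as stated, note that $(1+\lambda_{1,t})\|u\|_{L^2}^2\leq\|u\|^2_{H^{\frac{1+t}{2}}}+(1+\lambda_{1,t})(a_0^1)^2$. Part (i) of Lemma \ref{lemma control a_0 and a_1 with L_2 norm} then gives $(1+\lambda_{1,t})\|u\|^2_{L^2}\leq 2\|u\|^2_{H^{\frac{1+t}{2}}}$ once $C_0\eps_0\leq 1/\sqrt{2}$.
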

\begin{remark}
Similarly to what was said in Remark \ref{rem constraints}, the constraints on the volume and barycenter in Proposition \ref{prop coercivity} are necessary to ensure coercivity, as the quadratic form $\delta^2\F(0)[u,u]$ is otherwise only positive semidefinite. These specific constraints are preferred due to their clear geometric interpretation.
\end{remark}

\begin{remark}
    The coercivity of the $\alpha$-Perimeter functional for $\alpha\in(0,1)$ with respect to the $H^{\frac{1+\alpha}{2}}$-norm was established in \cite[Theorem 2.1]{figalli2015isoperimetry}. In our setting $\|\,\cdot\,\|_{H^{\frac{1+t}{2}}}$ remains the natural and optimal norm for the coercivity of the functional $\F$.
\end{remark}

\begin{proof}
\textit{Step 1} In this step we want to rewrite \eqref{second var F at zero}: we will make use of the expression of Sobolev norms of a function in terms of its harmonic coefficients. For the Gagliardo seminorm, recall that (see section \ref{Spherical harmonics and Sobolev functions on the sphere})
\begin{equation}
\label{Gagliardo harmonics}
    [u]^2_{H^{\frac{1+\alpha}{2}}}=\sum_{k\geq 1}\lambda_{k,\alpha} \sum_{i=0}^{d(k)} a_k^i(u)^2,
\end{equation}
where 
\begin{equation}\label{eq lambda^a_k}
    \lambda_{k,\alpha} = \frac{2^{1-\alpha}\pi^{\frac{n-1}{2}}}{1+\alpha}\frac{\Gamma\left(\frac{1-\alpha}{2}\right)}{\Gamma\left(\frac{n+\alpha}{2}\right)}\left(\frac{\Gamma\left(k+\frac{n+\alpha}{2}\right)}{\Gamma\left(k+\frac{n-\alpha-2}{2}\right)}-\frac{\Gamma\left(\frac{n+\alpha}{2}\right)}{\Gamma\left(\frac{n-2-\alpha}{2}\right)}\right).
\end{equation}
We also recall that (see \cite{haddad2022affine})
\begin{equation}\label{eq: P_a(B) in spherical harmonics}
    P_\alpha(B)=\frac{2^{1-\alpha}\pi^{\frac{n-1}{2}}n\omega_n\Gamma\left(\frac{1-\alpha}{2}\right)}{\alpha(n-\alpha)\Gamma\left(\frac{n-\alpha}{2}\right)},
\end{equation}
where $\omega_n=|B|=P(B)/n$. 
Moreover it holds
\begin{equation}
\label{L2 harmonics}
    \fint u^2-\left(\fint u\right)^2=\frac{1}{n\omega_n}\sum_{k\geq 0} \sum_{i=0}^{d(k)} a_k^i(u)^2-\frac{1}{n^2\omega_n^2}a_0^0(u)^2.
\end{equation}
We now substitute \eqref{Gagliardo harmonics} and \eqref{L2 harmonics} in \eqref{second var F at zero} and isolate the coefficients of degree $k=0$ and $k=1$: defining
\begin{equation*}
    A_{\alpha,k}\coloneqq \frac{\lambda_{k,\alpha}}{(n-\alpha)P_\alpha(B)},
\end{equation*}
and noticing that
\begin{equation*}
-\frac{t-s}{n\omega_n}+\frac{\lambda_{1,t}}{(n-t)P_t(B)}-\frac{\lambda_{1,s}}{(n-s)P_s(B)}=\frac{1}{n\omega_n}\left(-(t-s)+A_{t,1}-A_{s,1}\right)=0,
\end{equation*}
we obtain
\begin{equation}
\label{variazione seconda armoniche}
\begin{split}
    \frac{\delta^2\mathcal{F}(0)[u,u]}{\mathcal{F}(0)}&=\sum_{k\geq 2}\left(-\frac{t-s}{n\omega_n} +\frac{\lambda_{k,t}}{(n-t)P_t(B)}-\frac{\lambda_{k,s}}{(n-s)P_s(B)}\right)\sum_{i=0}^{d(k)} a^i_k(u)^2\\
    &+\left(-\frac{t-s}{n\omega_n} +\frac{\lambda_{1,t}}{(n-t)P_t(B)}-\frac{\lambda_{1,s}}{(n-s)P_s(B)}\right)\sum_{i=0}^{d(k)} a^i_1(u)^2+\frac{t-s}{n^2\omega^2_n}a_0^0(u)^2\\
    &= \sum_{k\geq 2}\frac{\sum_{i=0}^{d(k)} a^i_k(u)^2}{n\omega_n} \left(-(t-s)+A_{t,k}-A_{s,k}\right)+\frac{t-s}{n^2\omega^2_n}a_0^0(u)^2\\
    &\geq \sum_{k\geq 2}\frac{\sum_{i=0}^{d(k)} a^i_k(u)^2}{n\omega_n} \left(-(t-s)+A_{t,k}-A_{s,k}\right).
\end{split}
\end{equation} 
To conclude this step, notice that the coefficients $A_{\alpha,k}$ can be written just in terms of the Euler's $\Gamma$ function using \eqref{eq lambda^a_k} and \eqref{eq: P_a(B) in spherical harmonics}:
\begin{equation}
\label{Gamma A_k}
    A_{\alpha,k}=\frac{\alpha}{\alpha+1}\frac{n-\alpha-2}{2}\left(\frac{\Gamma\left(\frac{n-\alpha-2}{2}\right)}{\Gamma\left(\frac{n+\alpha}{2}\right)}\frac{\Gamma\left(k+\frac{n+\alpha}{2}\right)}{\Gamma\left(k+\frac{n-2-\alpha}{2}\right)}-1\right),
\end{equation}
and that
\begin{equation}\label{eq: Ak+1-Ak}
\begin{split}
    A_{\alpha,k+1}-A_{\alpha,k}&=\frac{\alpha}{\alpha+1}\frac{n-\alpha-2}{2n\omega_n}\frac{\Gamma\left(\frac{n-\alpha-2}{2}\right)}{\Gamma\left(k+\frac{n-2-\alpha}{2}\right)}\frac{\Gamma\left(k+\frac{n+\alpha}{2}\right)}{\Gamma\left(\frac{n+\alpha}{2}\right)}\left(\frac{k+\frac{n+\alpha}{2}}{k+\frac{n-2-\alpha}{2}}-1\right)\\
    &=\frac{\alpha}{n\omega_n}\frac{n+\alpha}{2k+n-2-\alpha}\prod_{j=1}^{k-1}\frac{j+\frac{n+\alpha}{2}}{j+\frac{n-\alpha-2}{2}}.
\end{split}
\end{equation}

\textit{Step 2} 
We claim that there exists a constant $c_1=c_1(n,s,t)\in (0,1)$ such that, for any $\lambda > c_1$, it holds
\begin{equation}
\label{Disuguaglianza step 2}
-(t-s)+\lambda A_{t,2}-A_{s,2}>0.
\end{equation}
By \eqref{Gamma A_k}, we have
\begin{equation*}
\begin{split}
    A_{\alpha,2}&=\frac{\alpha}{\alpha+1}\frac{n-\alpha-2}{2}\left(\frac{\Gamma\left(\frac{n-\alpha-2}{2}\right)}{\Gamma\left(\frac{n+\alpha}{2}\right)}\frac{\Gamma\left(2+\frac{n+\alpha}{2}\right)}{\Gamma\left(2+\frac{n-2-\alpha}{2}\right)}-1\right)=\frac{\alpha}{\alpha+1}\frac{n-\alpha-2}{2}\left[\frac{\frac{n+\alpha}{2}\left(\frac{n+\alpha}{2}+1\right)}{\frac{n-2-\alpha}{2}\left(\frac{n-2-\alpha}{2}+1\right)}-1\right]\\
    &=\frac{\alpha}{\alpha+1}\frac{n-\alpha-2}{2}\frac{4n(\alpha+1)}{(n-\alpha)(n-\alpha-2)}=\frac{2n\alpha}{(n-\alpha)}.
\end{split}
\end{equation*}
Therefore,
\begin{equation*}
\begin{split}
\label{bound A_t,2}
    -(t-s)+A_{t,2}-A_{s,2}&=(t-s)\left(-1+\frac{2n^2}{(n-t)(n-s)}\right)\\
    &=(t-s)\frac{n^2+n(t+s)-ts}{(n-t)(n-s)}\\
    &=\frac{t-s}{2nt}\frac{n^2+n(t+s)-ts}{n-s}A_{t,2}\eqqcolon (1-c_1)\,A_{t,2},
\end{split}
\end{equation*}
where $c_1=c_1(n,s,t)\in(0,1)$. 
This implies, for any $\lambda > c_1$,
\begin{equation*}
    -(t-s)+\lambda A_{t,2}-A_{s,2}>-(t-s)+c_1A_{t,2}-A_{s,2}=0,
\end{equation*}
 proving \eqref{Disuguaglianza step 2}.

\textit{Step 3} We claim that there exists a constant $c_0=c_0(n,s,t)\in(0,1)$ such that, for every $k\geq 2$,
\begin{equation}
\label{eq: main claim step3}
-(t-s)+c_0A_{t,k}-A_{s,k}>0.
\end{equation}
First, we prove that there exists $c_2=c_2(n,s,t)<1$ such that, for any $\lambda\geq c_2$ and for each $k\geq 2$ we have
\begin{equation}\label{eq: claim step3}
    -(t-s)+\lambda A_{t,k}-A_{s,k}>-(t-s)+\lambda A_{t,2}-A_{s,2}.
\end{equation}
This can be deduced by showing that for each $k\geq 2$ we have
\begin{equation*}
    \frac{A_{s,k+1}-A_{s,k}}{A_{t,k+1}-A_{t,k}}<c_2<1.
\end{equation*}
To verify this, recall \eqref{eq: Ak+1-Ak} and observe that
\begin{equation*}\label{Disuguaglianza step 3 for k}
    \frac{A_{s,k+1}-A_{s,k}}{A_{t,k+1}-A_{t,k}}=\frac{s(n+s)}{t(n+t)}\frac{P(s,k)}{P(t,k)}\frac{2k+n-2-t}{2k+n-2-s}<\frac{s(n+s)}{t(n+t)}\eqqcolon c_2 <1,
\end{equation*}
where
\begin{equation*}
    P(\alpha,k)\coloneqq \prod_{j=1}^{k-1}\frac{j+\frac{n+\alpha}{2}}{j+\frac{n-\alpha-2}{2}}.
\end{equation*}
Therefore \eqref{eq: claim step3} is proved.
On the other hand, \eqref{Disuguaglianza step 2} provides the existence of a constant $0<c_1(n,s,t)<1$ such that $-(t-s)+\lambda A_{t,2}-A_{s,2}>0$ for any $\lambda\geq c_1$.
It is therefore sufficient to take $\lambda = c_0(n,s,t)\coloneqq \max\left\{c_1,c_2\right\}$ in \eqref{eq: claim step3} in order to obtain
\begin{equation*}
    -(t-s)+c_0A_{t,k}-A_{s,k}\geq -(t-s)+c_1 A_{t,2}-A_{s,2}>0,
\end{equation*}
namely to prove \eqref{eq: main claim step3}. Notice that $c_0=c_0(n,s,t) \in (0,1)$.
 
\textit{Step 4}. We prove \eqref{eq:coercivity}: we apply to  \eqref{variazione seconda armoniche}, in order, \eqref{eq: main claim step3}, the fact that $A_{t,0}=0$ and $A_{t,1}=t$, and Lemma \ref{lemma control a_0 and a_1 with L_2 norm}, to obtain 
\begin{equation}
\label{coercivity Ht}
\begin{split}
    \delta^2\mathcal{F}(0)[u,u]&\geq\frac{\F(0)}{n\omega_n}\sum_{k\geq 2}(-(t-s)+A_{t,k}-A_{s,k})\sum_{i=0}^{d(k)}a^i_k(u)^2\\
    &\geq \frac{\F(0)}{n\omega_n}\left[(1-c_0)\sum_{k\geq 2}A_{t,k}\sum_{i=0}^{d(k)}a_k^i(u)^2\right]\\
    &=\frac{\mathcal{F}(0)}{n\omega_n}\left[(1-c_0)\sum_{k\geq 0}A_{t,k}\sum_{i=0}^{d(k)}a_k^i(u)^2-(1- c_0)\left(a_0^0(u)^2 A_{t,0}+\sum_{i=0}^{d(1)}a_1^i(u)^2A_{t,1}\right)\right]\\
    &=\frac{\mathcal{F}(0)}{n\omega_n}\left[(1-c_0)\sum_{k\geq 0}A_{t,k}\sum_{i=0}^{d(k)}a_k^i(u)^2-(1- c_0)t\sum_{i=0}^{d(1)}a_1^i(u)^2\right]\\
    &\geq\frac{\mathcal{F}(0)}{n\omega_n}\left[(1-c_0)\|u\|^2_{H^{\frac{1+t}{2}}}-(1-c_0)tC_1(n)\|u\|^4_{L^2}\right]\\
    & \geq\frac{(1-c_0)}{2n\omega_n}\mathcal{F}(0)\|u\|^2_{H^{\frac{1+t}{2}}}\eqqcolon c\|u\|^2_{H^{\frac{1+t}{2}}},
\end{split}
\end{equation}
provided $\|u\|_{L^2}\leq \left(\frac{1}{2C_1(n)}\right)^{\frac{1}{2}}\eqqcolon \eps_0$, where $C_1(n)$ is the constant appearing in Lemma \ref{lemma control a_0 and a_1 with L_2 norm}. \qedhere
\end{proof}

\begin{remark}
    We remark that the factor $c\coloneqq\frac{1-c_0}{2n\omega_n}$ is bounded away from zero whenever $s<t$, also in the regimes $s<t\approx 1$ and $0\approx s<t$: this follows by the fact that
    \begin{equation*}
        c=\frac{1}{2n\omega_n}\min\left\{1-\frac{s(n+s)}{t(n+t)},\frac{t-s}{n-s}\frac{n^2+n(t+s)-ts}{2nt}\right\}
    \end{equation*}
for any $0<s<t<1$. From this, we also deduce that, when $s\approx t$, it holds $(1-c_0)\approx (t-s)$.
\end{remark}
\section{Continuity of the second variation of \texorpdfstring{$\F$}{F}}\label{section continuity}
In this section we estimate the continuity of the second variation of $\F$ along rays and near the origin, that is, for $u\in C^1$ with $\|u\|_{\infty}<\frac{1}{2}$,
\begin{equation}\label{dam-lam estimate}
        \left |\delta^2 \F(u)[u,u]-\delta^2 \F(0)[u,u]\right |\leq C\,\|u\|_{C^1}\|u\|^2_{H^{\frac{1+t}{2}}},
\end{equation}
where $C$ is a constant depending only on $s,t$ and $n$ (Proposition \ref{prop dam-lam estimate}). This estimate allows us to bound the third-order remainder term in the Taylor expansion of $\F$ around $u=0$. 
As an intermediate result, we establish an estimate analogous to \eqref{dam-lam estimate} with $P_\alpha$ in place of $\F$  (Proposition \ref{d^2P_alpha(u)-d^2P_alpha(0)prop}).
As a byproduct, Proposition \ref{d^2P_alpha(u)-d^2P_alpha(0)prop} implies an Alexandrov-type estimate contained in \cite{degenn2023asymptotic}, as we show in Remark \ref{remarkdegennarokubinkubin}.
\begin{proposition}\label{d^2P_alpha(u)-d^2P_alpha(0)prop}
    If $u\in C^1(\dB)$ with $\|u\|_{C^1}<1/2$, and $\alpha\in(0,1)$, there exists $C=C(n,\alpha)>0$ such that    \begin{equation}\label{eq: dam-lam estimate for P_s}
        \left |\delta^2 P_\alpha(u)[u,u]-\delta^2 P_\alpha(0)[u,u]\right |\leq C\,\|u\|_{C^1}\|u\|^2_{H^{\frac{1+\alpha}{2}}}.
    \end{equation}
    Moreover, for any $|\lambda|\leq 1$,
    \begin{equation}\label{eq: dam-lam estimate for P_alpha(lambda u)}
        \left |\delta^2 P_\alpha(\lambda u)[u,u] \right |\leq C\,\|u\|^2_{H^{\frac{1+\alpha}{2}}}.
    \end{equation}
\end{proposition}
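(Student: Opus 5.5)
We plan to work directly from the explicit formula \eqref{second variation of P_alpha} with $\varphi=\psi=u$, comparing each of its three terms at a general $u$ with the corresponding term at $0$. The local term is elementary. Its difference is
\[
(n-\alpha)(n-\alpha-1)\frac{P_\alpha(B)}{P(B)}\int_{\partial B}u^2\big((1+u)^{n-\alpha-2}-1\big)\,d\sigma .
\]
Since $\|u\|_\infty<1/2$, we have $|(1+u)^{n-\alpha-2}-1|\le C|u|$, so this difference is bounded by $C\|u\|_\infty\|u\|_{L^2}^2$.

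The main work concerns the nonlocal terms. We first symmetrize the second term of \eqref{second variation of P_alpha} in $x\leftrightarrow y$, using the symmetries of $F$. This writes it as
\[
\int\!\!\int (u(x)-u(y))^2 F_{|x-y|}(1+u(x),1+u(y))\,d\sigma_x d\sigma_y
\]
plus a correction of the form
\[
\int\!\!\int u(x)^2\big(F(1+u(x),1+u(x))-F(1+u(y),1+u(x))\big)\,d\sigma_x d\sigma_y ,
\]
together with its mirror image. The correction then combines with the third term.

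We then use a Taylor expansion of $F$ around $(1,1)$. The key pointwise estimates, valid for $r,\rho\in[1/2,3/2]$, are
\[
|F_{d}(r,\rho)-F_d(1,1)|\le C\,\frac{|r-1|+|\rho-1|}{d^{n+\alpha}}+C\,\frac{|r-\rho|^2}{d^{n+\alpha+2}}, \qquad |\partial_1F_d(r,\rho)|\le \frac{C}{d^{n+\alpha}}.
\]
The second estimate is too crude near the diagonal. What is actually needed is that $\partial_1F_d(r,\rho)$ at $r=\rho$ has a kernel of order $d^{-(n+\alpha)}$ only through a cancellation. Since $|u(x)-u(y)|\le \|u\|_{C^1}|x-y|$, and since $(r-\rho)^2\le \|u\|_{C^1}^2 d^2$ in the denominator, we obtain
\[
\big|F_{d}(1+u(x),1+u(y))-F_d(1,1)\big|\le C\|u\|_{C^1}\, d^{-(n+\alpha)}.
\]
Hence the main term differs from $[u]^2_{H^{\frac{1+\alpha}{2}}}$ by at most $C\|u\|_{C^1}[u]^2_{H^{\frac{1+\alpha}{2}}}$, using $d^{n+\alpha}=d^{(n-1)+2\frac{1+\alpha}{2}}$.

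The remaining terms carry the weight $u(x)^2$ and a kernel that is not integrable on the diagonal on its own; this is the step we expect to be the main obstacle. Two contributions must be combined: the correction $u(x)^2\big(F(1+u(x),1+u(x))-F(1+u(y),1+u(x))\big)$ and the third term $u(x)^2\int_{u(y)}^{u(x)}\partial_1F(1+u(x),1+\rho)\,d\rho$. We will recognize their sum as
\[
u(x)^2\int_{u(y)}^{u(x)}\big(\partial_1F+\partial_2F\big)(1+\rho',1+u(x))\,d\rho'
\]
up to reorganization. The point is that along the diagonal direction $r=\rho$ the derivative $(\partial_1+\partial_2)F_d(r,r)=\frac{d}{dr}\big(r^{n-2-\alpha}d^{-(n+\alpha)}\big)$ is of size $d^{-(n+\alpha)}$, not worse. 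Off the diagonal, the symmetric part cancels to first order in $u(x)-u(y)$ after symmetrization in $x,y$. This leaves integrands bounded by
\[
C\,|u(x)|\,|u(x)-u(y)|^2\,d^{-(n+\alpha)} + C\,u(x)^2\,|u(x)-u(y)|\,d^{-(n+\alpha)} .
\]
The first is bounded by $\|u\|_\infty [u]^2$. For the second, we would first try the symmetrization $u(x)^2-u(y)^2=(u(x)+u(y))(u(x)-u(y))$, which again yields $C\|u\|_\infty[u]^2_{H^{\frac{1+\alpha}{2}}}$. Whatever piece does not symmetrize directly is handled by the radial term, which at $u=0$ matches $(n-\alpha)(n-\alpha-1)\frac{P_\alpha(B)}{P(B)}\|u\|^2_{L^2}$ via the constant-function identity $\int F_d(1,1)\,d\sigma_y$ coming from \eqref{P_alpha(B_u)}.

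Finally, \eqref{eq: dam-lam estimate for P_alpha(lambda u)} follows from \eqref{eq: dam-lam estimate for P_s} applied to $\lambda u$, whose $C^1$ norm is below $1/2$, combined with \eqref{second variation of P_alpha at 0}. This gives
\[
|\delta^2P_\alpha(\lambda u)[u,u]|\le C\|u\|^2_{H^{\frac{1+\alpha}{2}}}+C\|\lambda u\|_{C^1}\|u\|^2_{H^{\frac{1+\alpha}{2}}}\le C\|u\|^2_{H^{\frac{1+\alpha}{2}}}.
\]
Here we use that the argument for \eqref{eq: dam-lam estimate for P_s} in fact bounds $|\delta^2P_\alpha(v)[u,u]-\delta^2P_\alpha(0)[u,u]|$ by $C\|v\|_{C^1}\|u\|^2_{H^{\frac{1+\alpha}{2}}}$ for $v=\lambda u$, since all the estimates above only used the $C^1$ bound on the base point.
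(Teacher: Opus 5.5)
Your treatment of the local term and of the main nonlocal term $\int\!\!\int (u(x)-u(y))^2F_{|x-y|}(1+u(x),1+u(y))\,d\sigma_x d\sigma_y$ is exactly the paper's treatment of $E_1$ and $E_2$. Your regrouping of the rest is also correct: the correction plus the third term of \eqref{second variation of P_alpha} equals $R=2\int\!\!\int u(x)^2\int_{u(y)}^{u(x)}G_{|x-y|}(1+u(x),1+\rho)\,d\rho\,d\sigma_x d\sigma_y$, which is the paper's $E_3+E_4$.

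The gap is in the estimate of $R$. Using only $|G|\le C|x-y|^{-(n+\alpha)}$ leaves the integrand $u(x)^2|u(x)-u(y)|\,|x-y|^{-(n+\alpha)}$, and this does not close. After $|u(x)-u(y)|\le\|u\|_{C^1}|x-y|$ the kernel is $|x-y|^{-(n-1+\alpha)}$, which is not integrable on $\partial B$. Neither of your remedies repairs this.
\begin{itemize}
\item The symmetrization $u(x)^2-u(y)^2=(u(x)+u(y))(u(x)-u(y))$ requires the kernel multiplying $u(x)^2(u(x)-u(y))$ to be symmetric in $x,y$. The kernel $\bar G(x,y):=\frac{1}{u(x)-u(y)}\int_{u(y)}^{u(x)}G_{|x-y|}(1+u(x),1+\rho)\,d\rho$ is not symmetric, and you cannot symmetrize an absolute-value bound.
\item Nothing can be absorbed by the radial term. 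That term is already used up in the comparison giving $E_1$; for constant $u$ all nonlocal terms vanish identically; and $\int_{\partial B}F_{|x-y|}(1,1)\,d\sigma_y=\int_{\partial B}|x-y|^{-(n+\alpha)}d\sigma_y=+\infty$.
\end{itemize}

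The missing ingredient is the content of Lemmas \ref{AuxLemma} and \ref{fourth lemma}. One checks that $(u(x)-u(y))\bigl(\bar G(x,y)-\bar G(y,x)\bigr)=I(u(x),u(y))$, where
\[
I(a,b):=\int_b^a\bigl[G_{|x-y|}(1+a,1+\rho)-G_{|x-y|}(1+b,1+\rho)\bigr]\,d\rho .
\]
So what must be shown is $|I(a,b)|\le C(a-b)^2|x-y|^{-(n+\alpha)}$ whenever $|a-b|\le\|u\|_{C^1}|x-y|$. Equivalently, this is what is needed after the paper's split $u(x)^2=u(x)(u(x)-u(y))+u(x)u(y)$ and the symmetrization producing $E_4$.

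This bound does not follow from $I$ being symmetric and vanishing on the diagonal. Write $G_{|x-y|}(r,\rho)=|x-y|^{-(n+\alpha)}\Phi\bigl((r-\rho)^2/|x-y|^2,r,\rho\bigr)$; it varies on scale $|x-y|$ in $r-\rho$. Hence $|\partial_1G|$ is of size $|x-y|^{-(n+\alpha)}\bigl(1+\|u\|_{C^1}/|x-y|\bigr)$, and a Taylor bound again produces a non-integrable error.

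The paper instead writes $1+u(x)=\sigma-\delta$, $1+u(y)=\sigma+\delta$, $1+\rho=\sigma+t\delta$. Then $(r-\rho)^2=(1+t)^2\delta^2$ in the first $G$ and $(1-t)^2\delta^2$ in the second. The reflection $t\mapsto-t$ (the identity $A_1=A_2$) cancels the dependence on $(r-\rho)^2/|x-y|^2$ exactly. Only the variation in the remaining arguments survives; it is $O(|\delta|)$ with kernel $|x-y|^{-(n+\alpha)}$, which is the paper's bound $h\le C|\delta|$.

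Your remark that ``the symmetric part cancels to first order after symmetrization'' gestures in this direction, but it neither isolates $I$ nor supplies the cancellation. So \eqref{eq: dam-lam estimate for P_s} is not proved, precisely at the step you flagged as the main obstacle.

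Your derivation of \eqref{eq: dam-lam estimate for P_alpha(lambda u)} from \eqref{eq: dam-lam estimate for P_s} is fine. The paper gets it more directly from bilinearity, $\delta^2P_\alpha(\lambda u)[u,u]=\lambda^{-2}\delta^2P_\alpha(\lambda u)[\lambda u,\lambda u]$.
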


\begin{proof}
Throughout this proof, $C$ denotes a positive constant depending only on $n$ and $\alpha$, and may change from line to line.

Observe that \eqref{eq: dam-lam estimate for P_alpha(lambda u)} follows from  \eqref{eq: dam-lam estimate for P_s}. First, for $\lambda=0$ \eqref{eq: dam-lam estimate for P_alpha(lambda u)} follows directly from \eqref{second variation of P_alpha at 0}. For $\lambda\neq 0$, using again \eqref{second variation of P_alpha at 0}, we have
\begin{equation*}
\begin{split}
     \left |\delta^2 P_\alpha(\lambda u)[u,u] \right |&\leq \frac{1}{\lambda^2}\left|\delta^2 P_\alpha(\lambda u)[\lambda u,\lambda u]-\delta^2 P_\alpha(0)[\lambda u,\lambda u] \right |+\left|\delta^2 P_\alpha(0)[ u,u]\right|\\
     &\leq C\frac{1}{\lambda^2}\|\lambda u\|_{C^1}\|\lambda u\|^2_{H^{\frac{1+\alpha}{2}}} + C\|u\|^2_{H^{\frac{1+\alpha}{2}}} \leq \blue{C}\|u\|^2_{H^{\frac{1+\alpha}{2}}}.
\end{split}
\end{equation*}

Now we prove \eqref{eq: dam-lam estimate for P_s}. We divide the proof into four lemmas: in the first, we rewrite the expression we want to estimate:

\begin{lemma}
\label{first lemma}
It holds
\begin{equation}
\label{lemma1eq}
    \delta^2 P_\alpha(u)[u,u]-\delta^2 P_\alpha(0)[u,u]= E_1+E_2+E_3+E_4,
\end{equation}
where
\begin{equation*}
    \begin{split}
        E_1&\coloneqq (n-\alpha)(n-\alpha-1)\frac{P_\alpha(B)}{P(B)}\int_{\partial B} \left((1+u(x))^{n-\alpha-2}-1\right)u(x)^2\,d\sigma,\\
         E_2&\coloneqq 2\int_{\partial B \times \partial B} (u(x)^2-u(x)u(y))\left(F_{|x-y|}(1+u(x),1+u(y))-\frac{1}{|x-y|^{n+\alpha}}\right)\,d\sigma_x\,d\sigma_y,\\
        E_3&\coloneqq 2\int_{\partial B \times \partial B}(u(x)^2 - u(x)u(y))\left(\int_{u(y)}^{u(x)}\partial_1F_{|x-y|}(1+u(x),1+\rho) + \partial_2F_{|x-y|}(1+u(x),1+\rho)\,d\rho\right)\,d\sigma_x\,d\sigma_y,\\
        E_4&\coloneqq \int_{\partial B \times \partial B}u(x)u(y)\left(\int_{u(y)}^{u(x)}G_{|x-y|}(1+u(x),1+\rho)-G_{|x-y|}(1+u(y),1+\rho)\,d\rho\right)\,d\sigma_x\,d\sigma_y,
    \end{split}
\end{equation*}
and
\begin{equation}
\label{G expression}
    G_{|x-y|}(1+u(x),1+\rho)\coloneqq \partial_1F_{|x-y|}(1+u(x),1+\rho) + \partial_2F_{|x-y|}(1+u(x),1+\rho).
\end{equation}
\end{lemma}
\begin{proof}
Recalling \eqref{second variation of P_alpha} we can write
\begin{equation*}
\delta^2 P_\alpha(u)[u,u]-\delta^2 P_\alpha(0)[u,u]=E_1+E_0,
\end{equation*}
where
\begin{equation*}
    \begin{split}
        E_1&=(n-\alpha)(n-\alpha-1)\frac{P_\alpha(B)}{P(B)}\int_{\partial B} \left((1+u(x))^{n-\alpha-2}-1\right)u(x)^2\,d\sigma,\\
        E_0&=2\int_{\partial B \times \partial B} \left[u(x)^2F_{|x-y|}(1+u(x),1+u(x))-u(x)u(y)F_{|x-y|}(1+u(y),1+u(x))\right]\,d\sigma_xd\sigma_y\\
        &-2\int_{\partial B \times \partial B}(u(x)^2-u(x)u(y))\frac{1}{|x-y|^{n+\alpha}}\,d\sigma_xd\sigma_y\\
        &+2\int_{\partial B \times \partial B}u(x)^2\left(\int_{u(y)}^{u(x)}\partial_1F_{|x-y|}(1+u(x),1+\rho)\,d\rho\right)\,d\sigma_xd\sigma_y.
\end{split}    
\end{equation*}
By writing $F_{|x-y|}(1+u(x),1+u(x))=F_{|x-y|}(1+u(x),1+u(y))+\int_{u(x)}^{u(y)}\partial_2F_{|x-y|}(1+u(x),1+\rho)\,d\rho$, and adding and subtracting the term
\begin{equation*}
    2\int_{\partial B \times \partial B} u(x)u(y)\int_{u(y)}^{u(x)}\partial_1 F_{|x-y|}(1+u(x),1+\rho) + \partial_2F_{|x-y|}(1+u(x),1+\rho)\,d\rho,
\end{equation*}
we have
\begin{equation*}
\begin{split}
E_0=&2\int_{\partial B \times \partial B} (u(x)^2-u(x)u(y))\left(F_{|x-y|}(1+u(x),1+u(y))-\frac{1}{|x-y|^{n+\alpha}}\right)\,d\sigma_xd\sigma_y\\
&+2\int_{\partial B \times \partial B}(u(x)^2 - u(x)u(y))\left(\int_{u(y)}^{u(x)}\partial_1F_{|x-y|}(1+u(x),1+\rho) + \partial_2F_{|x-y|}(1+u(x),1+\rho)\,d\rho\right)\,d\sigma_xd\sigma_y\\
&+2\int_{\partial B \times \partial B}u(x)u(y)\left(\int_{u(y)}^{u(x)}\partial_1F_{|x-y|}(1+u(x),1+\rho) + \partial_2F_{|x-y|}(1+u(x),1+\rho)\,d\rho\right)\,d\sigma_xd\sigma_y\\
=&E_2+E_3+E_4,
\end{split}
\end{equation*}
where we used in the last line the symmetry of the integrals with respect to the variables $x$ and $y$.
\end{proof}
In the following lemma, we estimate the first three terms in the right hand side of \eqref{lemma1eq}: 

\begin{lemma}
\label{second lemma}
    For $i=1,2,3$, it holds
    \begin{equation*}
        \left|E_i\right|\leq C\|u\|_{C^1}[u]^2_{H^{\frac{1+\alpha}{2}}}.
    \end{equation*}
\end{lemma}
\begin{proof}
Regarding $E_1$, we observe that 
\begin{equation}\label{term 1}
    |E_1|=(n-\alpha)(n-\alpha-1)\frac{P_\alpha(B)}{P(B)}\int_{\partial B} \left((1+u(x))^{n-\alpha-2}-1\right)u(x)^2\,d\sigma\leq C\|u\|_{\infty}\|u\|^2_{L^2}.
\end{equation}

In order to estimate $E_2$, notice that
\begin{equation*}
    \left|F_{|x-y|}(1+u(x),1+u(y))-\frac{1}{|x-y|^{n+\alpha}}\right|\leq C\frac{\|u\|_{C^1}}{|x-y|^{n+\alpha}}.
\end{equation*}
Indeed, arguing as in \cite[Lemma 2.2]{degenn2023asymptotic}, one can write
\begin{equation*}
    \begin{split}
        F_{|x-y|}(1+u(x),1+u(y))=&\frac{(1+(n-1)u(x)+\mathcal{O}(u^2(x)))(1+(n-1)u(y)+\mathcal{O}(u^2(y)))}{((u(x)-u(y))^2+(1+u(x))(1+u(y))|x-y|^{2})^{\frac{n+\alpha}{2}}}\\
        =&\frac{1+(n-1)(u(x)+u(y))+\mathcal{O}(u^2(x)+u^2(y))}{|x-y|^{n+\alpha}\left(\left(\frac{u(x)-u(y)}{|x-y|}\right)^2+(1+u(x))(1+u(y))\right)^{\frac{n+\alpha}{2}}}\\
        &\hspace{-3cm}=\frac{1+(n-1-\frac{n+\alpha}{2})(u(x)+u(y))}{|x-y|^{n+\alpha}}-\frac{n+\alpha}{2}\frac{(u(x)-u(y))^2}{|x-y|^{n+\alpha+2}}+\mathcal{O}\left(\frac{(u(x)-u(y))^2}{|x-y|^2}+u^2(x)+u^2(y)\right).
    \end{split}
\end{equation*}
From this we deduce that
\begin{equation*}
    \begin{split}
    \left|F_{|x-y|}(1+u(x),1+u(y))-\frac{1}{|x-y|^{n+\alpha}}\right|\leq \frac{C\|u\|_{C^1}}{|x-y|^{n+\alpha}}.
    \end{split}
\end{equation*}
Therefore, symmetrizing the double integral over the sphere and recalling that the function $(x,y)\mapsto F_{|x-y|}(1+u(x),1+u(y))$ is symmetric,
\begin{equation*}\label{term 2}
\begin{split}
    |E_2|&=\left|2\int_{\partial B \times \partial B} (u(x)^2-u(x)u(y))\left(F_{|x-y|}(1+u(x),1+u(y))-\frac{1}{|x-y|^{n+\alpha}}\right)\,d\sigma_xd\sigma_y\right|\\
    &=\int_{\partial B \times \partial B} (u(x)-u(y))^2\left|F_{|x-y|}(1+u(x),1+u(y))-\frac{1}{|x-y|^{n+\alpha}}\right|\,d\sigma_xd\sigma_y\leq C\|u\|_{C^1}[u]^2_{H^{\frac{1+\alpha}{2}}}.
    \end{split}
\end{equation*}

For the term $E_3$, recalling \eqref{F_|x-y|}, we have that
\begin{equation*}
    \partial_1 F_{|x-y|}(a,b)=\frac{a^{n-1}b^{n-1}}{((a-b)^2+ab|x-y|^2)^{\frac{n+\alpha}{2}}}\left(\frac{n-1}{a}-\frac{n+\alpha}{2}\frac{2(a-b)+b|x-y|^2}{((a-b)^2+ab|x-y|^2)}\right),
\end{equation*}
for any $a,b>0$. Notice that, by symmetry, $\partial_2 F_{|x-y|}(a,b)=\partial_1 F_{|x-y|}(b,a)$. Therefore
\begin{equation*}\label{G(a,b)}
    \begin{split}
        G_{|x-y|}(a,b)&= \partial_1 F_{|x-y|}(a,b)+\partial_1F_{|x-y|}(b,a)\\
        &=\frac{a^{n-1}b^{n-1}}{|x-y|^{n+\alpha}\left(\frac{(a-b)^2}{|x-y|^2}+ab\right)^{\frac{n+\alpha}{2}}}\left((n-1)\frac{a+b}{ab}-\frac{n+\alpha}{2\left(\frac{(a-b)^2}{|x-y|^2}+ab\right)}(a+b)\right).
    \end{split}
\end{equation*}
Thus
\begin{equation*}
    \left\|G_{|x-y|}(1+u(x),1+\rho)\right\|_\infty\leq C\frac{1}{|x-y|^{n+\alpha}}.
\end{equation*}
We hence deduce the following bound:
\begin{equation}\label{term 3}
|E_3|\leq\int_{\partial B \times \partial B}|u(x)| |u(x)- u(y)|^2\,\|G_{|x-y|}(1+u(x),1+\rho)\|_{\infty}\,d\sigma_xd\sigma_y\leq C\|u\|_{\infty}[u]^2_{H^{\frac{1+\alpha}{2}}}.
\end{equation}
\end{proof}
The last two lemmas are used to estimate the term $|E_4|$. In Lemma \ref{AuxLemma} we decompose this term in two terms, controlling the first one as desired. The second one, instead, will be the object of Lemma \ref{fourth lemma}.
\begin{lemma}
\label{AuxLemma}
For any $u \in C^1(\partial B)$ with $\|u\|_{C^1}<1$,  it holds
\begin{equation}
\begin{split}
\label{auxlemmaeq}
|E_4|
&\leq C\|u\|^2_{\infty}[u]^2_{H^{\frac{1+\alpha}{2}}}+\int_{\partial B \times \partial B}u(x)u(y)\frac{u(x)-u(y)}{|x-y|^{n+\alpha}}h(u(x),u(y),n,\alpha)\,d\sigma_xd\sigma_y,
\end{split}
\end{equation}
where $h$ is an explicit function.
\end{lemma}
\begin{proof}
Letting $\sigma\coloneqq 1+\frac{u(x)+u(y)}{2}$, $\delta\coloneqq \frac{u(y)-u(x)}{2}$, $t=\frac{2\rho-u(x)-u(y)}{u(y)-u(x)}$, and recalling \eqref{G expression}, we have
\begin{equation}
\begin{split}  
\label{G variabili cambiate}
&\int_{u(y)}^{u(x)}[G(1+u(x),1+\rho) - G(1+u(y),1+\rho)]\,d\rho\\=&\int_{-1}^{1}\left[G(\sigma-\delta,t\delta+\sigma)-G(\sigma+\delta,t\delta+\sigma)\right]\delta\,dt\\
    =&\int_{-1}^{1} \delta \frac{(\sigma-\delta)^{n-1}(t\delta+\sigma)^{n-1}}{D_1^{\frac{n+\alpha}{2}}}\Bigg[ (n-1)\frac{2\sigma+(t-1)\delta}{(\sigma-\delta)(\sigma+t\delta)}-\frac{n+\alpha}{2}\frac{2\sigma+(t-1)\delta}{D_1}|x-y|^2\Bigg]\,dt\\
     -&\int_{-1}^{1}  \delta\frac{(\sigma+\delta)^{n-1}(t\delta+\sigma)^{n-1}}{D_2^{\frac{n+\alpha}{2}}}
     \Bigg[ (n-1)\frac{2\sigma+(t+1)\delta}{(\sigma+\delta)(\sigma+t\delta)}-\frac{n+\alpha}{2}\frac{2\sigma+(t+1)\delta}{D_2}|x-y|^2\Bigg]\,dt,\\
    \end{split}
\end{equation} where, for the sake of clarity, we defined
\begin{equation}
\begin{split}
\label{D_1D_2}
    D_1&\coloneqq (t+1)^2\delta^2+(\sigma-\delta)(t\delta+\sigma)|x-y|^2,\\
    D_2&\coloneqq (t-1)^2\delta^2+(\sigma+\delta)(t\delta+\sigma)|x-y|^2.
    \end{split}
\end{equation}
Now, in \eqref{G variabili cambiate}, we can group together the terms with a factor $\delta^2$ and those with a factor $\delta$, which will contribute to give, respectively, the first and the second term in the right hand side of \eqref{auxlemmaeq}. Namely, we can write
\begin{equation}
\label{maineqauxlemma}
    \int_{u(y)}^{u(x)}[G_{|x-y|}(1+u(x),1+\rho) - G_{|x-y|}(1+u(y),1+\rho)]\,d\rho=G_1+G_2
\end{equation}
where
\begin{equation}
\label{maineqauxlemma2}
\begin{split}
      G_1= &\delta^2\int_{-1}^{1} \frac{(\sigma-\delta)^{n-1}(t\delta+\sigma)^{n-1}}{D_1^{\frac{n+\alpha}{2}}}(t-1)\Big[\frac{n-1}{(\sigma-\delta)(\sigma+t\delta)}
   - \frac{n+\alpha}{2D_1}|x-y|^2\Big]\,dt\\
&-\delta^2\int_{-1}^{1} \frac{ (\sigma+\delta)^{n-1}(t\delta+\sigma)^{n-1}}{D_2^{\frac{n+\alpha}{2}}}(t+1)\Big[\frac{n-1}{(\sigma+\delta)(\sigma+t\delta)}
   - \frac{n+\alpha}{2D_2}|x-y|^2\Big]\,dt,\\
G_2=&\delta\int_{-1}^{1} \frac{(\sigma-\delta)^{n-1}(t\delta+\sigma)^{n-1}}{D_1^{\frac{n+\alpha}{2}}}2\sigma\Big[\frac{n-1}{(\sigma-\delta)(\sigma+t\delta)}
   - \frac{n+\alpha}{2D_1}|x-y|^2\Big]\,dt\\
&-\delta\int_{-1}^{1} \frac{ (\sigma+\delta)^{n-1}(t\delta+\sigma)^{n-1}}{D_2^{\frac{n+\alpha}{2}}}2\sigma\Big[\frac{n-1}{(\sigma+\delta)(\sigma+t\delta)}
   - \frac{n+\alpha}{2D_2}|x-y|^2\Big]\,dt.\\
   \end{split}
\end{equation}
Recalling that $\delta=\frac{1}{2}(u(y)-u(x))$ and 
observing that, by \eqref{D_1D_2}
\begin{equation*}
\begin{split}
        D_1\geq (\sigma-\delta)(t\delta+\sigma)|x-y|^2=(1+u(x))(1+\rho)|x-y|^2\geq (1-\|u\|_{\infty})^2|x-y|^2,
\end{split}
\end{equation*} and the same holds for $D_2$, one can deduce that
\begin{equation*}
    G_1\leq \frac{|u(x)-u(y)|^2}{|x-y|^{n+\alpha}}C.
\end{equation*}
On the other hand,
\begin{equation*}
    G_2=\frac{u(x)-u(y)}{|x-y|^{n+\alpha}}\,h(u(x),u(y),n,\alpha), 
\end{equation*}
where
\begin{equation}
\label{C_1}
\begin{split}
    h(u(x),u(y),n,\alpha)=&\int_{-1}^{1} \frac{(\sigma-\delta)^{n-1}(t\delta+\sigma)^{n-1}}{D_1^{\frac{n+\alpha}{2}}}2\sigma|x-y|^{n+\alpha}\Big[\frac{n-1}{(\sigma-\delta)(\sigma+t\delta)}
   - \frac{n+\alpha}{2D_1}|x-y|^2\Big]\,dt\\
   &-\int_{-1}^{1} \frac{ (\sigma+\delta)^{n-1}(t\delta+\sigma)^{n-1}}{D_2^{\frac{n+\alpha}{2}}}2\sigma|x-y|^{n+\alpha}\Big[\frac{n-1}{(\sigma+\delta)(\sigma+t\delta)}
   - \frac{n+\alpha}{2D_2}|x-y|^2\Big]\,dt.
\end{split}
\end{equation}
\end{proof}
We proceed to estimate the remaining term in \eqref{auxlemmaeq}:
\begin{lemma}
\label{fourth lemma}
    For $h$ defined as in \eqref{C_1}, it holds 
    \begin{equation*}
        h \leq C|u(x)-u(y)|.
    \end{equation*}
    In particular, we deduce
    \begin{equation*}
        |E_4|\leq C\|u\|^2_{\infty}[u]^{2}_{H^{\frac{1+\alpha}{2}}}.
    \end{equation*}
\end{lemma}
\begin{proof}
We need to show that the term $h$ as described in \eqref{C_1} can in turn be estimated as
\begin{equation}
\label{C2andC3}
    h\leq C|\delta|.
\end{equation}
To see this, we first split $h$ in two parts, namely we write
\begin{equation}
\label{C_2andgamma}
    h=2\sigma(n-1)|x-y|^{n+\alpha}\gamma_1+\sigma(n+\alpha)|x-y|^{n+\alpha}\gamma_2,
\end{equation}
where
\begin{equation*}
\begin{split}
    \gamma_1&=\int_{-1}^{1} \frac{(\sigma-\delta)^{n-2}(t\delta+\sigma)^{n-2}}{D_1^{\frac{n+\alpha}{2}}}\,dt
   -\int_{-1}^{1} \frac{ (\sigma+\delta)^{n-2}(t\delta+\sigma)^{n-2}}{D_2^{\frac{n+\alpha}{2}}}\,dt,\\
   \gamma_2&=-\int_{-1}^{1} \frac{(\sigma-\delta)^{n-1}(t\delta+\sigma)^{n-1}}{D_1^{\frac{n+\alpha}{2}+1}}|x-y|^2\,dt
    +\int_{-1}^{1} \frac{ (\sigma+\delta)^{n-1}(t\delta+\sigma)^{n-1}}{D_2^{\frac{n+\alpha}{2}+1}}|x-y|^2\,dt.
   \end{split}
\end{equation*}
We deal first with the term $\gamma_1$: recalling the definition \eqref{D_1D_2}, we can add and subtract the terms
\begin{equation*}
    \begin{split}
         A_1\coloneqq&\int_{-1}^{1} \frac{(\sigma+\delta)^{n-2}(t\delta+\sigma)^{n-2}}{((t+1)^2\delta^2+(\sigma+\delta)(t\delta+\sigma)|x-y|^2)^{\frac{n+\alpha}{2}}}\,dt,\\
         A_2\coloneqq &\int_{-1}^{1} \frac{(\sigma+\delta)^{n-2}(-t\delta+\sigma)^{n-2}}{((-t+1)^2\delta^2+(\sigma+\delta)(-t\delta+\sigma)|x-y|^2)^{\frac{n+\alpha}{2}}}\,dt,
    \end{split}
\end{equation*}
and write
\begin{equation*}
\label{gamma_1}
\begin{split}
    \gamma_1=&\int_{-1}^{1} \frac{(\sigma-\delta)^{n-2}(t\delta+\sigma)^{n-2}}{D_1^{\frac{n+\alpha}{2}}}\,dt
   -A_1\\
   &+A_1-A_2\\
   &+A_2-\int_{-1}^{1} \frac{(\sigma+\delta)^{n-2}(t\delta+\sigma)^{n-2}}{D_2^{\frac{n+\alpha}{2}}}\,dt.
   \end{split}
\end{equation*}
We first notice that, by symmetry, $A_1-A_2=0$.

Then, we treat the term 
\begin{equation}
\begin{split}
\label{differenza gamma1}
    B_1\coloneqq &\int_{-1}^1\frac{(\sigma-\delta)^{n-2}(t\delta+\sigma)^{n-2}}{D_1^{\frac{n+\alpha}{2}}}\,dt-A_1\\
    &=\int_{-1}^1 \Bigg[\frac{(\sigma-\delta)^{n-2}(t\delta+\sigma)^{n-2}}{((t+1)^2\delta^2+(\sigma-\delta)(t\delta+\sigma)|x-y|^2)^{\frac{n+\alpha}{2}}}\\
    &- \frac{(\sigma+\delta)^{n-2}(t\delta+\sigma)^{n-2}}{((t+1)^2\delta^2+(\sigma+\delta)(t\delta+\sigma)|x-y|^2)^{\frac{n+\alpha}{2}}}\Bigg]\,dt.
\end{split}
\end{equation}
Introduce the function 
\begin{equation}
\label{auxiliaryfunctiong}
g(x)\coloneqq \frac{x^{n-2}k}{(c+xm)^{\frac{n+\alpha}{2}}},
\end{equation}
where $k=(t\delta+\sigma)^{n-2}$, $c= (t+1)^2\delta^2$, $m= (t\delta+\sigma)|x-y|^2$, so that $B_1=\int_{-1}^{1}\left[g(\sigma-\delta)-g(\sigma+\delta)\right]$.
Using $|g(x_1)-g(x_2)|\leq \sup_{(x_1,x_2)}|g'||x_1-x_2|$, we can estimate
\begin{equation*}
\begin{split}
|g(\sigma+\delta)-g(\sigma-\delta)|\leq& \sup_{x\in(\sigma-\delta,\sigma+\delta)}\bigg|\frac{d}{dx}\frac{x^{n-2}k}{(c+xm)^{\frac{n+\alpha}{2}}}\bigg|2|\delta|\\
=& 2|\delta|\sup_{x\in(\sigma-\delta,\sigma+\delta)}\bigg|\frac{(n-2)x^{n-3}k}{(c+xm)^{\frac{n+\alpha}{2}}}-\frac{n+\alpha+2}{2}\frac{x^{n-2}km}{(c+xm)^{\frac{n+\alpha}{2}+1}}\bigg|\\
\leq & \frac{C}{|x-y|^{n+\alpha}}|\delta|.
\end{split}
\end{equation*}
Therefore
\begin{equation*}
    \begin{split}
        |B_1|= \left|\int_{-1}^{1}g(\sigma+\delta)-g(\sigma-\delta)\,dt\right|\leq C\frac{|\delta|}{|x-y|^{n+\alpha}}.
    \end{split}
\end{equation*}
Similarly, in order to estimate 
\begin{equation*}
\begin{split}
B_3&\coloneqq A_2-\int_{-1}^{1} \frac{(\sigma+\delta)^{n-2}(t\delta+\sigma)^{n-2}}{D_2^{\frac{n+\alpha}{2}}}\,dt\\
&=\int_{-1}^{1} \left[\frac{(\sigma+\delta)^{n-2}(-t\delta+\sigma)^{n-2}}{((-t+1)^2\delta^2+(\sigma+\delta)(-t\delta+\sigma)|x-y|^2)^{\frac{n+\alpha}{2}}}\,dt- \frac{(\sigma+\delta)^{n-2}(t\delta+\sigma)^{n-2}}{((t-1)^2\delta^2+(\sigma+\delta)(t\delta+\sigma)|x-y|^2)^{\frac{n+\alpha}{2}}}\right]\,dt,
\end{split}
\end{equation*}
we can use the same auxiliary function introduced in  \eqref{auxiliaryfunctiong}, with $k=(\sigma+\delta)^{n-2}$, $c=(-t+1)^2\delta^2$ and $m=(\sigma+\delta)|x-y|^2$. In this way, we get
\begin{equation*}
    \left|B_3\right|\leq C\frac{|\delta|}{|x-y|^{n+\alpha}},
\end{equation*}
and hence
\begin{equation}
\label{gamma_1ineq}
    |\gamma_1|\leq |\delta|C.
\end{equation}

To conclude, we notice that one can adapt the estimates used to obtain \eqref{gamma_1ineq} for $\gamma_2$ by repeating verbatim the argument,
which in turn gives
\begin{equation}
\label{gamma_2ineq}
    |\gamma_2|\leq |\delta|C.
\end{equation}
Recalling \eqref{C_2andgamma}, by estimates \eqref{gamma_1ineq} and \eqref{gamma_2ineq} we deduce \eqref{C2andC3}, which provides the desired estimate on the term $E_4$:
\begin{equation*}
\begin{split}
    |E_4|
&\leq C\|u\|^2_{\infty}[u]^2_{H^{\frac{1+\alpha}{2}}}+\int_{\partial B \times \partial B}u(x)u(y)\frac{u(x)-u(y)}{|x-y|^{n+\alpha}}h(u(x),u(y),n,\alpha)\,d\sigma_xd\sigma_y\\
    &\leq  C\|u\|^2_{\infty}[u]^2_{H^{\frac{1+\alpha}{2}}}+C\int_{\partial B \times \partial B}u(x)u(y)\frac{|u(x)-u(y)|^2}{|x-y|^{n+\alpha}}\,d\sigma_xd\sigma_y\\
    &\leq C\,\|u\|^2_{\infty}[u]^2_{H^{\frac{1+\alpha}{2}}}.
\end{split}
\end{equation*}
This concludes the proof of Lemma \ref{fourth lemma} and hence of Proposition \ref{d^2P_alpha(u)-d^2P_alpha(0)prop}.
\renewcommand{\qedsymbol}{}
\end{proof}
\end{proof}

\begin{remark}
    Notice that the constant $C(n,\alpha)$ in Proposition \ref{d^2P_alpha(u)-d^2P_alpha(0)prop} is finite as $\alpha\to0$ or $\alpha\to1$.
\end{remark}

\begin{remark}
\label{remarkdegennarokubinkubin}
    Before continuing our discussion, let us mention that Proposition \ref{d^2P_alpha(u)-d^2P_alpha(0)prop} implies Theorem 0.3 obtained in \cite{degenn2023asymptotic}: indeed, on the one hand, if $\|u\|_{C^1}$ is small enough (and hence $\|u\|_{L^2}$), by Proposition \ref{prop coercivity} and Proposition \ref{d^2P_alpha(u)-d^2P_alpha(0)prop}, we deduce
    \begin{equation*}
    \begin{split}
        \delta P_\alpha(u)[u]=&\,\delta P_\alpha (0)[u]+\int_0^1\delta^2P_\alpha(u)[u,u]\, dt\\
        =& \int_0^1 \delta^{2}P_\alpha(0)[u,u]+\delta^2P_\alpha(0)[u,u]-\delta^2P_\alpha(u)[u,u]\,dt\\
        \geq&\,\int_0^1 \delta^2 P_\alpha(0)[u,u]-\left|\delta^2P_\alpha(0)[u,u]-\delta^2P_\alpha(u)[u,u]\right|\,dt\\
        \geq&\,c\|u\|^2_{H^{\frac{1+\alpha}{2}}}.
    \end{split}
    \end{equation*}
    On the other hand, denoting by $H^\alpha[u](x)$ the $\alpha$-mean curvature of the set $E_u$ at the point $x \in \partial E_u$ (see \cite{degenn2023asymptotic} for the definition), and by $\overline{H^\alpha}[u]$ the average of $H^\alpha[u]$ over $\partial E_u$, we have
    \begin{equation*}
        \delta P^\alpha (u)[u]=\int_{\partial B} \left(H^\alpha [u](x)-\overline{H^\alpha}[u]\right)u(x)\,dx\leq \frac{1}{\lambda}\left\|H^\alpha[u]-\overline{H^\alpha}[u]\right\|^2_{L^2}+\lambda \|u\|^2_{L^2},
    \end{equation*}
    for any $\lambda>0$, and hence we deduce the following Alexandrov - type estimate (Theorem 0.3 in \cite{degenn2023asymptotic})
    \begin{equation*}
        \|u\|^2_{H^{\frac{1+\alpha}{2}}}\leq C(n,\alpha)\left\|H^{\alpha}[u]-\overline{H^{\alpha}}[u]\right\|^2_{L^2}.
    \end{equation*}
\end{remark}

We conclude the section adapting the continuity property of Proposition \ref{d^2P_alpha(u)-d^2P_alpha(0)prop} to the functional $\mathcal{F}$:
\begin{proposition}\label{prop dam-lam estimate}
    If $u\in C^1(\dB)$ with $\|u\|_{C^1}\leq\frac{1}{2}$ and $|E_u|=|B|$, then there exists $C=C(n,s,t)>0$ such that
    \begin{equation}\label{eq: dam-lam estimate}
        \left |\delta^2 \F(u)[u,u]-\delta^2 \F(0)[u,u]\right |\leq C\|u\|_{C^1}\|u\|^2_{H^{\frac{1+t}{2}}}.
        \end{equation}
\end{proposition}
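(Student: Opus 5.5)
The plan is to reduce \eqref{eq: dam-lam estimate} to Proposition \ref{d^2P_alpha(u)-d^2P_alpha(0)prop}, applied with $\alpha=s$ and $\alpha=t$. We use the explicit formula \eqref{second variation of F}. Write
\[
a_\alpha(u)\coloneqq\frac{\delta P_\alpha(u)[u]}{(n-\alpha)P_\alpha(u)},\qquad b_\alpha(u)\coloneqq\frac{\delta^2P_\alpha(u)[u,u]}{(n-\alpha)P_\alpha(u)}.
\]
Then
\[
\delta^2\F(u)[u,u]=\F(u)\Big((a_t-a_s)^2+b_t-b_s-(n-t)a_t^2+(n-s)a_s^2\Big)(u).
\]
We subtract the same expression at $u=0$ and telescope the differences of products. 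Each resulting term contains one ``difference factor'' of the form $X(u)-X(0)$, multiplied by quantities evaluated at $u$ or at $0$.

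We proceed in the following order.

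\textbf{Zeroth-order quantities.} From \eqref{P_alpha(B_u)}, or simply from the fact that $E_u$ is a $C^1$-small perturbation of $B$, we get $|P_\alpha(u)-P_\alpha(B)|\le C\|u\|_{C^1}$. In particular $P_\alpha(u)$ is bounded above and below, and $|\F(u)-\F(0)|\le C\|u\|_{C^1}$.

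\textbf{First variations.} From \eqref{first variation of P_alpha} we have
\[
|\delta P_\alpha(u)[u]-\delta P_\alpha(0)[u]|\le C\|u\|_{C^1}\|u\|_{L^2}.
\]
The first term is bounded by $C\|u\|_\infty\|u\|_{L^1}$. The double-integral term is bounded by $\int\!\!\int|u(x)||u(x)-u(y)|\,|x-y|^{-n-\alpha}$, which after symmetrization is controlled by $\|u\|_{C^1}$ times $[u]^2$-type quantities. Alternatively, one can write $\delta P_\alpha(u)[u]-\delta P_\alpha(0)[u]=\int_0^1\delta^2P_\alpha(\lambda u)[u,u]\,d\lambda$ and use \eqref{eq: dam-lam estimate for P_alpha(lambda u)}, which gives the bound $C\|u\|^2_{H^{\frac{1+\alpha}{2}}}$.

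The volume constraint enters here. By \eqref{first var of P_alpha at zero}, $\delta P_\alpha(0)[u]=c\int u$, and Lemma \ref{lemma control a_0 and a_1 with L_2 norm}(i) gives $|\int u|\le C\|u\|_{L^2}^2$. Hence
\[
|a_\alpha(u)|+|a_\alpha(0)|\le C\|u\|^2_{H^{\frac{1+t}{2}}},
\]
using $H^{\frac{1+t}{2}}\hookrightarrow H^{\frac{1+s}{2}}$. Every quadratic term in the $a$'s, at $u$ or at $0$, is therefore $O(\|u\|^4_{H^{\frac{1+t}{2}}})$. Since $\|u\|_{H^{\frac{1+t}{2}}}^2\le C\|u\|_{C^1}^2$, such terms are $\le C\|u\|_{C^1}\|u\|^2_{H^{\frac{1+t}{2}}}$, once we note that $\|u\|^2_{H}\le C\|u\|_{C^1}\|u\|_{H}$ after bounding one factor by $C\|u\|_{C^1}$.

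\textbf{Second-variation terms.} We write
\[
b_\alpha(u)-b_\alpha(0)=\frac{\delta^2P_\alpha(u)[u,u]-\delta^2P_\alpha(0)[u,u]}{(n-\alpha)P_\alpha(u)}+\delta^2P_\alpha(0)[u,u]\,\frac{P_\alpha(B)-P_\alpha(u)}{(n-\alpha)P_\alpha(u)P_\alpha(B)}.
\]
The first piece is handled by \eqref{eq: dam-lam estimate for P_s}. The second piece is handled by \eqref{second variation of P_alpha at 0} combined with the zeroth-order step. Both are $\le C\|u\|_{C^1}\|u\|^2_{H^{\frac{1+t}{2}}}$.

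\textbf{Prefactor.} The remaining term is $(\F(u)-\F(0))\,(b_t-b_s)(0)$, which is bounded by $C\|u\|_{C^1}\|u\|^2_{H^{\frac{1+t}{2}}}$ using the zeroth-order step and \eqref{second variation of P_alpha at 0}.

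\textbf{Main obstacle.} The delicate point is the first-variation terms. A naive bound $|\delta P_\alpha(u)[u]|\le C\|u\|_{L^2}$ yields products of order $\|u\|^2_{L^2}$, which carry no factor $\|u\|_{C^1}$ in front of a norm of order $H^2$. This is exactly why the volume constraint is needed. The plan is to use Lemma \ref{lemma control a_0 and a_1 with L_2 norm}(i) together with the integrated form $\int_0^1\delta^2P_\alpha(\lambda u)[u,u]\,d\lambda$ and \eqref{eq: dam-lam estimate for P_alpha(lambda u)}. This makes each $a_\alpha$ quadratically small, so every product of two $a$'s is quartic and hence absorbed.
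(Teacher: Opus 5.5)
Your proposal is correct and follows essentially the paper's route. The terms quadratic in the first variations become quartic through the volume constraint, Lemma \ref{lemma control a_0 and a_1 with L_2 norm}(i) and the integrated form of \eqref{eq: dam-lam estimate for P_alpha(lambda u)}. The second-variation terms are split by the triangle inequality and handled with \eqref{eq: dam-lam estimate for P_s}, \eqref{second variation of P_alpha at 0} and Lipschitz bounds on $P_\alpha(u)$ and $\F(u)$. Your cruder bound $|P_\alpha(u)-P_\alpha(B)|\le C\|u\|_{C^1}$, where the paper uses $C\|u\|^2_{H^{\frac{1+\alpha}{2}}}$, suffices equally well.

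Two small fixes are needed. First, drop the first justification in your first-variation paragraph: with absolute values, $\iint|u(x)||u(x)-u(y)||x-y|^{-n-\alpha}$ is generally infinite, so rely only on the integrated form. Second, take the lower bound $P_\alpha(u)\ge P_\alpha(B)$ from the fractional isoperimetric inequality under $|E_u|=|B|$; the estimate $|P_\alpha(u)-P_\alpha(B)|\le C\|u\|_{C^1}$ alone does not keep $P_\alpha(u)$ away from zero when you only assume $\|u\|_{C^1}\le\frac12$.
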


\begin{proof} 
Fix $u$ as in the hypothesis and let us define the scalar functions 
$p_{t,u},p_{s,u}, f_{u}\colon [0,1]\to [0,\infty)$ by
\begin{equation*}
    p_{t,u}(\lambda)=P_t(\lambda u),\quad\quad p_{s,u}(\lambda)=P_s(\lambda u)\quad\text{and}\quad f_u(\lambda)=\frac{p_{t,u}(\lambda )^{\frac{1}{n-t}}}{p_{s,u}(\lambda )^{\frac{1}{n-s}}}.
\end{equation*}
Recalling \eqref{second variation of F} we have
\begin{equation*}
\begin{split}
f_u''(\lambda) &= f_u(\lambda)\left[ 2 \left(\frac{p_{t,u}'(\lambda)}{(n-t) p_{t,u}(\lambda)}\right)^2 
- 2 \frac{p_{t,u}'(\lambda)}{(n-t) p_{t,u}(\lambda)} \frac{p_{s,u}'(\lambda)}{(n-s) p_{s,u}(\lambda)} \right] \\
&\quad + f_u(\lambda) \left[ \frac{p_{t,u}''(\lambda)}{(n-t) p_{t,u}(\lambda)} - \frac{p_{s,u}''(\lambda)}{(n-s) p_{s,u}(\lambda)} \right].
\end{split}
\end{equation*}
Therefore, by triangle inequality
\begin{equation}\label{eq0 prop stima d^2F(u)-d^2F(0)}
    \begin{split}
      &\left|\delta^2 \F(u)[u,u]-\delta^2 \F(0)[u,u]\right| = \left| f_u''(1) - f_u''(0) \right| \\
      &\quad\quad\leq
\frac{2 |f_u(1)|}{(n-t)^2 |p_{t,u}(1)|^2} \left| p_{t,u}'(1) \right|^2 + \frac{2 |f_u(1)|}{(n-s)(n-t) |p_{s,u}(1)| |p_{t,u}(1)|} \left| p_{s,u}'(1) \right| \left| p_{t,u}'(1) \right| \\
&\quad\quad
+ \frac{2 |f_u(0)|}{(n-t)^2 |p_{t,u}(0)|^2} \left| p_{t,u}'(0) \right|^2 + \frac{2 |f_u(0)|}{(n-s)(n-t) |p_{s,u}(0)| |p_{t,u}(0)|} \left| p_{s,u}'(0) \right| \left| p_{t,u}'(0) \right| \\
&\quad\quad+ \left| f_u(1) \frac{p_{t,u}''(1)}{(n-t) p_{t,u}(1)} - f_u(0) \frac{p_{t,u}''(0)}{(n-t) p_{t,u}(0)} \right|
+ \left| f_u(1) \frac{p_{s,u}''(1)}{(n-s) p_{s,u}(1)} - f_u(0) \frac{p_{s,u}''(0)}{(n-s) p_{s,u}(0)} \right|.
    \end{split}
    \end{equation}
First, we estimate the terms where only the first derivative of the perimeter appears. Using \eqref{first var of P_alpha at zero}, the volume constraint, and Lemma \ref{lemma control a_0 and a_1 with L_2 norm}, we have
\begin{equation}\label{eq1 prop stima d^2F(u)-d^2F(0)}
    |p_{\alpha,u}'(0)|\leq C(n,\alpha)\|u\|^2_{L^2} \quad\text{for }\alpha=s,t.
\end{equation}
Throughout this proof $C(n,\alpha)$ denotes a positive constant that may vary from line to line.
    By Taylor expansion, there exists $\tilde\lambda \in(0,1)$ such that $   p_{\alpha,u}'(1)=p_{\alpha,u}'(0)+p_{t,u}''(\tilde{\lambda}).$
    Hence, by \eqref{eq: dam-lam estimate for P_alpha(lambda u)} and \eqref{eq1 prop stima d^2F(u)-d^2F(0)}, we obtain
\begin{equation}\label{eq2 prop stima d^2F(u)-d^2F(0)}
         |p_{\alpha,u}'(1)|\leq C(n,\alpha)\|u\|^2_{L^2}.
    \end{equation}
Using \eqref{eq1 prop stima d^2F(u)-d^2F(0)} and \eqref{eq2 prop stima d^2F(u)-d^2F(0)} we get that the first four terms in \eqref{eq0 prop stima d^2F(u)-d^2F(0)} are bounded by
$C(n,s,t)\|u\|^4_{L^2}$.
    
Next, we estimate the last two terms in \eqref{eq0 prop stima d^2F(u)-d^2F(0)}. By triangle inequality, we obtain
\begin{equation}\label{eq5 prop stima d^2F(u)-d^2F(0)}
\begin{split}
&\left| f_u(1) \frac{}{} \frac{p_{\alpha,u}''(1)}{(n-\alpha) p_{\alpha,u}(1)} - f_u(0) \frac{p_{\alpha,u}''(0)}{(n-\alpha) p_{\alpha,u}(0)} \right|
\leq \frac{|f_u(1)|}{(n-\alpha)|p_{\alpha,u}(1)|} \left| p_{\alpha,u}''(1) - p_{\alpha,u}''(0) \right| \\
&\quad + \frac{|f_u(1)|}{n-\alpha} \left| p_{\alpha,u}''(0) \right| \left| \frac{1}{p_{\alpha,u}(1)} - \frac{1}{p_{\alpha,u}(0)} \right|
+ \frac{|p_{\alpha,u}''(0)|}{(n-\alpha) |p_{\alpha,u}(0)|} \left| f_u(1) - f_u(0) \right|.
\end{split}
\end{equation}
By Taylor's formula, as before, by \eqref{eq: dam-lam estimate for P_alpha(lambda u)} and \eqref{eq1 prop stima d^2F(u)-d^2F(0)} we have
\begin{equation}\label{eq6 prop stima d^2F(u)-d^2F(0)}
|p_{\alpha,u}(1) - p_{\alpha,u}(0)| = \left| p_{\alpha,u}'(0) + p_{\alpha,u}''(\tilde\lambda) \right| \leq C(n,\alpha) \|u\|^2_{H^{\frac{1+\alpha}{2}}}, \quad \tilde\lambda\in (0,1).
\end{equation}
Moreover, since the function $(x,y) \mapsto x^{\frac{1}{n-t}} y^{-\frac{1}{n-s}}$ is locally Lipschitz away from $y=0$, we obtain
\begin{equation}\label{eq4 prop stima d^2F(u)-d^2F(0)}
|f_u(1) - f_u(0)| \leq C \left( |p_{t,u}(1) - p_{t,u}(0)| + |p_{s,u}(1) - p_{s,u}(0)| \right)
\leq C(n,s,t) \|u\|^2_{H^{\frac{1+\alpha}{2}}}.
\end{equation}
Combining \eqref{eq: dam-lam estimate for P_s}, \eqref{second variation of P_alpha at 0}, \eqref{eq6 prop stima d^2F(u)-d^2F(0)}, and \eqref{eq4 prop stima d^2F(u)-d^2F(0)} we can bound \eqref{eq5 prop stima d^2F(u)-d^2F(0)} as
\begin{equation}\label{eq7 prop stima d^2F(u)-d^2F(0)}
\begin{split}
    \left| f_u(1) \frac{}{} \frac{p_{\alpha,u}''(1)}{(n-\alpha) p_{\alpha,u}(1)} - f_u(0) \frac{p_{\alpha,u}''(0)}{(n-\alpha) p_{\alpha,u}(0)} \right|
    &\leq C(n,\alpha)\left(\|u\|_{C^1}\|u\|^2_{H^{\frac{1+t}{2}}}+\|u\|^4_{L^2}\right)\\ 
    &\leq C(n,\alpha)\|u\|_{C^1}\|u\|^2_{H^{\frac{1+t}{2}}}.
\end{split}
\end{equation}
Finally, combining \eqref{eq1 prop stima d^2F(u)-d^2F(0)},\eqref{eq2 prop stima d^2F(u)-d^2F(0)} and \eqref{eq7 prop stima d^2F(u)-d^2F(0)}, we conclude that
\begin{equation*}
    \left|\delta^2 \F(u)[u,u]-\delta^2 \F(0)[u,u]\right |\leq C(n,s,t)\|u\|_{C^1}\|u\|^2_{H^{\frac{1+t}{2}}}.
\end{equation*}
\end{proof}

\section{Proof of Theorems \ref{main theorem} and \ref{main theorem quantitative}}\label{section geometric results}

We first prove Theorem \ref{main theorem quantitative} using coercivity  and continuity  of $\delta^2\F(0)$ (Propositions \ref{prop coercivity} and \ref{prop dam-lam estimate}, respectively).
In order to prove Theorem \ref{main theorem}, we need to remove the constraints on the barycenter and the volume of $E_u$ and this is achieved using Lemma \ref{lemma graph with different volume and baricenter}.

\begin{proof}[Proof of Theorem \ref{main theorem quantitative}]
Since the functional $\F$ is invariant under rescaling and translations, we may assume without loss of generality that $E$ is the graph over the unit ball $B$.

Consider the real function $\lambda\mapsto f(\lambda)=F(\lambda u)$ for $\lambda\in[0,1]$. By \eqref{eq: dam-lam estimate}, $f''(\lambda)$ exists and it is bounded for all $\lambda\in[0,1]$. The Taylor expansion of $f$ near $0$ gives
    \begin{equation*}
        f(1)-f(0)=f'(0)+\int_0^1(1-t)f''(t)dt.
    \end{equation*}
    Equivalently, in terms of the functional $\F$, using also \eqref{first variation =0}, we have 
    \begin{equation*}
        \mathcal{F}(u)-\mathcal{F}(0)=\delta \mathcal{F}(0)[u]+\int_0^1(1-t)\delta^2\mathcal{F}(tu)[u,u]dt=\int_0^1(1-t)\delta^2\mathcal{F}(tu)[u,u]dt.
    \end{equation*}
    Then, by \eqref{eq:coercivity} and \eqref{eq: dam-lam estimate}, we estimate
    \begin{equation*}
    \begin{split}
        \delta^2\mathcal{F}(tu)[u,u]&\geq  \delta^2\mathcal{F}(0)[u,u]-\left|\delta^2\mathcal{F}(tu)[u,u]-\delta^2\mathcal{F}(0)[u,u]\right|\\
        &\geq (c-C\|u\|_{C^1})\|u\|^2_{H^{\frac{1+t}{2}}}.
        \end{split}
    \end{equation*}
    For $\|u\|_{C^1}$ sufficiently small, this concludes the proof.
\end{proof}

\begin{lemma}\label{lemma graph with different volume and baricenter}
     Let $E$ be a graph over any ball via the function $u$, with $\|u\|_{C^1(\partial B)}\leq\eps$. Then there exists a constant $c=c(n)>0$ such that $E$ is also a graph over $B(y,r)$, the unique ball with $\bari(E)=y$ and $|E|=|B(y,r)|$, via a function $v$ satisfying $\|v\|_{C^1(\partial B)}\leq c(n)\eps$.
\end{lemma}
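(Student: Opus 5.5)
By scale and translation invariance of the notion of graph, we may assume $E=E_u$ is a graph over the unit ball $B$ centered at the origin, with $\|u\|_{C^1}\le\eps$ and $\eps$ small (say $\eps\le 1/4$). The plan has three steps: locate the target ball, write the radial function of $E$ seen from its barycenter, and estimate its $C^1$ norm.

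\emph{Step 1: the target ball.} Write $r^n=|E|/|B|=\frac{1}{n|B|}\int_{\partial B}(1+u)^n\,d\sigma$. Then $|r-1|\le C(n)\|u\|_\infty\le C\eps$. For the barycenter, by the computation in the proof of Lemma \ref{lemma control a_0 and a_1 with L_2 norm},
\begin{equation*}
y=\bari(E)=\frac{1}{(n+1)|E|}\int_{\partial B}x\,(1+u(x))^{n+1}\,d\sigma .
\end{equation*}
Since $\int_{\partial B}x\,d\sigma=0$, this gives $|y|\le C(n)\|u\|_\infty\le C\eps$.

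\emph{Step 2: the radial function around $y$.} Since $E$ is star-shaped with respect to the origin with a $C^1$ radial function $R(x)=1+u(x)$ close to $1$, it is also star-shaped with respect to any point $y$ with $|y|$ small. For $\theta\in\partial B$, let $\rho(\theta)>0$ be the unique solution of
\begin{equation*}
|y+\rho\theta| = R\!\left(\frac{y+\rho\theta}{|y+\rho\theta|}\right).
\end{equation*}
Then set $v(\theta)=\rho(\theta)/r-1$, so that $E=\{y+\lambda r(1+v(\theta))\theta\}$.

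Existence, uniqueness and $C^1$ regularity follow from the implicit function theorem applied to
\begin{equation*}
\Phi(\rho,\theta,y,u)=|y+\rho\theta|-1-u\!\left(\frac{y+\rho\theta}{|y+\rho\theta|}\right).
\end{equation*}
Its $\rho$-derivative is $\frac{(y+\rho\theta)\cdot\theta}{|y+\rho\theta|}-\nabla_\tau u\cdot(\ldots)$, which is $\ge 1-C\eps>0$ when $|y|,\|u\|_{C^1}$ are small. This transversality is exactly the condition that the ray from $y$ crosses $\partial E$ once.

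\emph{Step 3: the $C^1$ estimate.} We have $|\rho-1|\le|y|+\|u\|_\infty\le C\eps$, hence $\|v\|_\infty\le C\eps$ using Step 1. Differentiating the defining identity in $\theta$ gives
\begin{equation*}
\nabla\rho=-\frac{\partial_\theta\Phi}{\partial_\rho\Phi}.
\end{equation*}
Here $\partial_\theta\Phi$ involves $\rho\,\theta$-derivatives of $|y+\rho\theta|$, which are $O(|y|)$ since $|y+\rho\theta|=\rho+y\cdot\theta+O(|y|^2)$, plus terms bounded by $\|\nabla u\|_\infty$ times a bounded Jacobian. Hence $\|\nabla v\|_\infty\le C(|y|+\|u\|_{C^1})\le C(n)\eps$.

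The main obstacle is Step 3: making the constants uniform, i.e. ensuring the denominator $\partial_\rho\Phi$ stays bounded below. This needs a smallness threshold $\eps\le\eps(n)$, which I would impose at the start; for larger $\eps$ the statement is not meaningful anyway.
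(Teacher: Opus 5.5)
Your proof is correct, and it takes a different route from the paper at the key step: showing that $E$ is globally a radial graph around $y$. The bounds $|y|,|r-1|\le C(n)\eps$ are obtained as in the paper; yours are even in terms of $\|u\|_\infty$.

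\emph{The paper's route.} It uses the forward map $z(x)=\frac{(1+u(x))x-y}{|(1+u(x))x-y|}$, which sends the angle of a boundary point seen from the origin to its direction seen from $y$. It defines $v$ implicitly by $v(z(x))=\frac1r|(1+u(x))x-y|-1$ and shows $|\partial_\eta z|\ge 1-C\|u\|_{C^1}$, so $z$ is a local diffeomorphism. It then invokes topology: a local diffeomorphism of the compact connected sphere is a covering map, and since $z$ is homotopic to the identity it has degree one, hence is a global diffeomorphism.

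\emph{Your route.} You solve directly along each ray from $y$, so global well-posedness reduces to a one-dimensional intermediate-value and monotonicity argument, with the implicit function theorem giving $C^1$ dependence on $\theta$.

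\emph{What each buys.} Yours avoids covering spaces and degree theory. It also yields the equality of solid sets $E=\{y+\lambda\rho(\theta)\theta:\lambda\in[0,1],\ \theta\in\partial B\}$ directly, since the sign of $\Phi$ characterizes membership in $E$; the paper only tracks $\partial E$. The paper's route gives an explicit formula for $v\circ z$, which makes the $C^0$ bound immediate.

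\emph{Two points to state more carefully.}
\begin{enumerate}
\item The implicit function theorem does not by itself give existence and uniqueness of $\rho(\theta)$; it only gives regularity of a solution already known to exist. For fixed $\theta$, every zero of $\Phi(\cdot,\theta)$ lies in the window $|\rho-1|\le|y|+\eps$. Moreover $\Phi<0$ to the left of this window and $\Phi>0$ to the right of it, and $\partial_\rho\Phi\ge 1-C\eps$ on the window. This gives exactly one zero.
\item That lower bound on $\partial_\rho\Phi$ holds only on the window, not for all $\rho$. Near $\rho\approx|y|$, for $\theta$ close to $-y/|y|$, the first term $\frac{\rho+y\cdot\theta}{|y+\rho\theta|}$ can be negative, and the $\nabla u$ term is of order $\eps/|y+\rho\theta|$.
\end{enumerate}
With these clarifications the argument is complete.
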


\begin{proof}
Without loss of generality, we can assume that $E$ is the graph over the unit ball via the function $u$, thus, $\partial E=\{(1+ u(x))x\colon x\in\partial B\}$. Let $y\in\R^n$ and $r>0$ be such that $\bari(E)=y$ and $|E|=|B(y,r)|$. By \eqref{eq: int(1+u)^n-1=0} and \eqref{eq: xint(1+u)^n+1=0} there exists a constant $c(n)>0$ such that 
\begin{equation*}
    |y|\leq c(n)\|u\|_{C^1}\quad\text{and}\quad |r-1|\leq c(n)\|u\|_{C^1}.
\end{equation*}
We aim to show that there exists a scalar function $v$ with $\|v\|_{C^1(\partial B)}\leq c(n)\eps$ and such that
\begin{equation*}
    \partial E=\{r(1+v(\xi))\xi+y\colon \xi\in\partial B \}.
\end{equation*}
To this end, defining the map $z\colon \partial B\to \partial B$ as
    \begin{equation*}
    z(x) \coloneqq\frac{(1+u(x))x-y}{|(1+u(x))x-y|},
    \end{equation*}
    and    \begin{equation}\label{equation of v(z(w))}
    v(z(x))\coloneqq \frac{1}{r}|(1+u(x))x-y|-1,
    \end{equation}
we obtain that
\begin{equation*}
    (1+ u(x))x=r(1+v(z(x)))z(x)+y \quad\text{for any }x\in\partial B.
\end{equation*}

Assume that $z$ is a local diffeomorphism. Since $\partial B$ is compact and connected, $z$ is a surjective covering map. Moreover, since $z$ is homotopic to the identity, it has degree one and is therefore injective. It follows that $z$ is a global diffeomorphism. 
Thus, by \eqref{equation of v(z(w))}, the function $v\colon \partial B\to\R$ is well-defined and of class $C^1$, and satisfies 
\begin{equation*}
    \|v\|_{C^1}\leq \frac{1}{1-c(n)\|u\|_{C^1}}(1+2\|u\|_{C^1})-1\leq c(n)\|u\|_{C^1}.
\end{equation*} 

We now verify that $z$ is a local diffeomorphism. Let $x\in\partial B$ and $\eta \in T_x(\partial B)$ be a unit tangent vector to the sphere at $x$. If we denote by $h(x)\coloneqq(1+u(x))x-y$, we have

\begin{equation*}
    \partial_\eta h(x)=x\partial_\eta u(x)+ (1+u(x))\eta,
\end{equation*}
and, since $x\cdot\eta=0$,
\begin{equation*}
    h(x)\cdot \partial_\eta h(x)=(1+u(x))\partial_\eta u(x)+ y\cdot \left(x\partial_\eta u(x)+ (1+u(x))\eta\right).
\end{equation*}
Therefore
\begin{equation*}
\begin{split}
        \left|\frac{\partial}{\partial \eta} z(x)\right|=\left|\frac{\partial_\eta h(x)}{|h(x)|}-\frac{h(x)}{|h(x)|^3}h(x)\cdot \partial_\eta h(x)\right|\geq \frac{|\partial_\eta h(x)|}{|h(x)|}-\frac{1}{|h(x)|^2}\left|h(x)\cdot \partial_\eta h(x)\right|\geq 1-c(n)\|u\|_{C^1}.
\end{split}
\end{equation*} 
This shows that $z$ has non-vanishing tangential derivatives, and hence it is a local diffeomorphism.
\end{proof}

\begin{proof}[Proof of Theorem \ref{main theorem}]
Let $E$ be the graph over a ball via the function $u$ satisfying $\|u\|_{C^1}\leq\eps_1$. If $B(y,r)$ is the unique ball such that $\bari(E)=y$ and $|E|=|B(y,r)|$, by Lemma \ref{lemma graph with different volume and baricenter} $E$ is also a graph over $B(y,r)$ via the function $v$ satisfying  $\|v\|_{C^1(\partial B)}\leq c(n)\eps_1\leq\eps_0$. Hence, by Theorem \ref{main theorem quantitative}, we have
\begin{equation*}
    \mathcal{F}(E)\geq \mathcal{F}(B)+c_0\|v\|^2_{H^{\frac{1+t}{2}}},
\end{equation*}
and in particular
\begin{equation*}
    \F(E)\geq\F(B).
\end{equation*}
\end{proof}
\begin{comment}
\end{comment}
\printbibliography
\end{document}